\newtheorem{Thm}{Theorem}
\newtheorem{theorem}{Theorem}[section]
\newtheorem{lemma}[theorem]{Lemma}
\newtheorem{corollary}[theorem]{Corollary}
\newtheorem{proposition}[theorem]{Proposition}
\theoremstyle{definition}
\newtheorem{remark}[theorem]{Remark}
\newtheorem{definition}[theorem]{Definition}
\newenvironment{proaf}{{\noindent \sc Proof} }{\mbox{ }\hfill$\Box$
                        \vspace{1.5ex}
                        \par}
\def\C{\mathbb C}
\def\0{\underline 0}
\begin{document}

\title {\bf On the Milnor classes of local complete intersections\footnote{Research partially supported by CAPES, CNPq and FAPESP,
Brazil, and by CONACYT and PAPIIT-UNAM, Mexico. \newline $\quad$ {\it
Key-words:} Complete intersections, Milnor classes,
Whitney stratifications, Schwartz-MacPherson classes, Fulton-Johnson classes.
\newline   {\it Mathematics Subject Classification.} Primary ;
32S50, 32S15, 14B05  Secondary; 14C17, 14J17, 55S35  }}

\vspace{1cm}
\author{R. Callejas-Bedregal, M. F. Z. Morgado and J. Seade }

\date{\bf }
\maketitle

\begin{abstract}
In this work we study   algebraic, geometric and topological
properties of  the Milnor classes of local complete
intersections
 with arbitrary singularities. We describe first the
Milnor class of the intersection of a finite number of
hypersurfaces, under certain conditions of transversality, in
terms of the Milnor classes of the  hypersurfaces. Using this
description we obtain a Parusi\'{n}ski-Pragacz type formula, an
Aluffi type formula and a description of the Milnor class of the
local complete intersection in terms of the global L\^e cycles of
the hypersurfaces that define it. We consider next  the general
case of a local complete intersection $Z(s)$ defined by a regular
section $s$ of a rank $r$ holomorphic bundle $E$ over a compact
manifold $M$, $r \geq 2$. We notice that  $s$ determines a
hypersurface $Z(\tilde s)$ in the total space of the
projectivization $\mathbb{P}(E^{\vee})$ of the dual bundle
$E^{\vee}$, and we give a formula  expressing
 the total Milnor class of the local complete intersection $Z(s)$ in terms of the
 Milnor classes of the  hypersurface $Z(\tilde s)$.

\end{abstract}

\section*{Introduction}

If $Y$ is a singular variety  in a complex manifold $M$,   its total
Milnor class $\mathcal M_*(Y)$ is an element in its  Chow group
$A_*(Y)$ that measures the difference between its total
Schwarz-MacPherson class $c_*^{SM}(Y)$  and its total Fulton-Johnson
class   $c_*^{FJ}(Y)$  of $Y$. Both of these classes are
generalizations for singular varieties of the classical Chern
classes of manifolds. The total Milnor class actually has support in
the singular set ${\rm Sing}(Y)$, and there  is a Milnor class in
each dimension, from 0 to that of ${\rm Sing}(Y)$. In particular,
when $Y$ has only isolated singularities which are all local
complete intersections,  there is only a $0$-degree Milnor class,
which is an integer, and this is the sum of the local Milnor numbers
(by \cite{SS, Suwa}).

The concept of Milnor classes  appears first  implicitly in P.
Aluffi's work \cite{Aluffi, Aluffi1} on  $\mu$-classes for
hypersurfaces in algebraic manifolds. Milnor classes for
hypersurfaces also appear implicitly in A. Parusi\'nski and P.
Pragacz' article \cite {Par-Pr0}, though the actual name of Milnor
classes was coined later
  by various authors at about the same time (see \cite{BLSS1, BLSS2, Yokura, Par-Pr}).
  The case of local complete intersections was first envisaged in \cite {BLSS1, BLSS2}.

The study of Milnor classes is an active field of current research, with significant applications to other related areas (see for instance  \cite{Alu-Mar,Beh, BSY, Max}). Furthermore, this notion is  notably extended in \cite{CMSS} to that of
Milnor-Hirzebruch classes of locally complete intersections in algebraic manifolds. These  are  Hodge-theoretic classes that arise from the  Hirzebruch classes introduced by Brasselet {\it et al}  in \cite {BSY} (see also \cite{MSS}).

Milnor classes  are  mysterious ``objects"  that encode much
information about the varieties in question, and this is being
studied by various authors from several points of views. For
instance, when $Y$ is defined by a regular section of a very ample
holomorphic line bundle over $M$, it is  proved in  \cite{BMS} that
the Milnor classes determine the global L\^e cycles and {\it vice
versa}. The global L\^e cycles are  a natural extension of the local
L\^e cycles introduced by D. Massey in \cite{Massey0, Massey} for
holomorphic map-germs, which determine (among other things) the
topology of  the local Milnor fibre up to homeomorphism.

Yet, most of the work on Milnor classes in the literature  is for the case of hypersurfaces. Here we focus on the
 complete intersection case, which is much harder ({\it cf.}  \cite {BLSS1, BLSS2, MSS}).

We consider first a finite collection  $\{E_{i}\}$, $r\geq 2$,
 of holomorphic vector bundles $E_{i}$ over
$M$ of rank $d_{i}$. For each of these bundles we consider  a regular holomorphic section  $s_{i}:M\rightarrow E_{i}$
 and let $X_{i}$ be the $(n-d_{i})$-dimensional local
complete intersection  defined by the zeroes of
 $s_{i}$.  We assume  further that the   $X_i$ are equipped with  Whitney stratifications such that all the intersections amongst strata in the various $X_i$ are transversal.
Then  we prove a result (Theorem \ref{4.1}) that
describes the total Milnor class of $X$ in terms of the total Schwartz-MacPherson
and Milnor classes of  the $X_i.$
 Our starting point  is
the work by Ohmoto and Yokura in \cite{O-Y}, describing the total
Milnor class of  finite Cartesian products of hypersurfaces. As a consequence we get (Corollary \ref{11}):

\begin{Thm}
With the above hypothesis we have: $${\cal
M}(X)=(-1)^{r-1}c\left( \left( TM|_{X}\right)^{\oplus r-1}
\right)^{-1}\cap \displaystyle\sum_{i=1}^{r} a_{1,i}\cdot...\cdot
a_{r-1,i} \cdot {\cal M}(X_{i}),$$ where
$a_{j,i}=\left\{\begin{array}{ccl}
c^{Vir}(X_{j+1}) &\;{\rm if}\;& j\leq i\\
c^{SM}(X_{j}) &\;{\rm if}\;& j> i\\
\end{array}\right.$.
\end{Thm}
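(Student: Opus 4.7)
The corollary is a reformulation of Theorem \ref{4.1}, and my plan is to deduce it by a direct algebraic substitution. The crucial ingredient is the following identity, valid for each LCI $X_i$ defined by a regular section of $E_i$:
$$c^{SM}(X_i) \;=\; c^{Vir}(X_i) + (-1)^{\dim X_i}\,\mathcal{M}(X_i),$$
which follows from the very definition $\mathcal{M}(X_i) = (-1)^{\dim X_i}(c^{SM}(X_i) - c^{FJ}(X_i))$ together with the equality $c^{FJ}(X_i) = c^{Vir}(X_i)$ holding for local complete intersections defined by regular sections.

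First I would write out the formula for $\mathcal{M}(X)$ provided by Theorem \ref{4.1}: it expresses $\mathcal{M}(X)$ as a polynomial in the $c^{SM}(X_j)$'s and $\mathcal{M}(X_j)$'s, divided by an appropriate power of $c(TM|_X)$, patterned on the Ohmoto--Yokura product formula for Cartesian products of hypersurfaces. Then I would substitute the identity above into each $c^{SM}(X_{j+1})$ with index $j \leq i$, so that the Schwartz--MacPherson classes of factors with smaller index than the Milnor-class factor are rewritten as virtual classes plus corrections, and expand. Collecting the coefficient of each $\mathcal{M}(X_i)$ should yield precisely the product $a_{1,i}\cdots a_{r-1,i}$: the factors $c^{Vir}(X_{j+1})$ for $j \leq i$ arise from the substitution, while the untouched factors $c^{SM}(X_j)$ for $j > i$ remain. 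The global sign $(-1)^{r-1}$ and the prefactor $c((TM|_X)^{\oplus r-1})^{-1}$ are inherited directly from Theorem \ref{4.1}, where the exponent $r-1$ reflects the $r-1$ pairwise intersections needed to build $X$ from the $X_i$.

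The main obstacle will be the combinatorial bookkeeping of the substitution, and in particular verifying that the cross-terms involving two or more Milnor classes cancel out, or are absorbed into the linear coefficients in the precise pattern prescribed by $a_{j,i}$. This cancellation is essentially the statement that the formula of Theorem \ref{4.1} is already linear in the $\mathcal{M}(X_j)$'s modulo such higher-order pieces. The asymmetry in the definition of $a_{j,i}$ (depending on whether $j \leq i$ or $j > i$) mirrors the ordered, iterative construction $X = ((X_1 \cap X_2)\cap X_3) \cap \cdots \cap X_r$, which is the natural way to derive Theorem \ref{4.1} itself from the two-factor case, and it is the tracking of the indices through this iteration that guarantees one obtains exactly the prescribed $a_{j,i}$ rather than a symmetrized variant.
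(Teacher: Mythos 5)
Your overall strategy---deduce the corollary from Theorem \ref{4.1} by pure algebra, using the relation between $c^{SM}$, $c^{Vir}$ and $\mathcal{M}$---is exactly the route the paper takes (its ``proof'' is the single sentence preceding the corollary). But as written your plan has a real gap: you defer precisely the step that constitutes the proof, namely the disappearance of the terms containing two or more Milnor-class factors, labelling it an ``obstacle'' to be checked later; and your recipe of substituting $c^{SM}(X_{j+1})=c^{Vir}(X_{j+1})+\cdots$ only ``for $j\le i$'' presupposes the answer. There is nothing to cancel term by term if you run the algebra in the other direction: by inspection of the signs $\epsilon_i$, the sum in Theorem \ref{4.1} is exactly
$$\prod_{i=1}^{r}\Bigl((-1)^{n-d_i}c^{SM}(X_i)+\mathcal{M}(X_i)\Bigr)\;-\;\prod_{i=1}^{r}(-1)^{n-d_i}c^{SM}(X_i),$$
each factor of the first product collapses to $(-1)^{n-d_i}c^{Vir}(X_i)$ by the definition $\mathcal{M}(X_i)=(-1)^{n-d_i}\bigl(c^{Vir}(X_i)-c^{SM}(X_i)\bigr)$, and the resulting difference of products telescopes,
$$\prod_i c^{Vir}(X_i)-\prod_i c^{SM}(X_i)=\sum_{i=1}^{r}c^{SM}(X_1)\cdots c^{SM}(X_{i-1})\bigl(c^{Vir}(X_i)-c^{SM}(X_i)\bigr)c^{Vir}(X_{i+1})\cdots c^{Vir}(X_r),$$
after which reinserting $c^{Vir}(X_i)-c^{SM}(X_i)=(-1)^{n-d_i}\mathcal{M}(X_i)$ and collecting signs finishes the argument in one line (for line bundles, $d_i=1$, the $i$-dependent signs collapse to the global $(-1)^{r-1}$).

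Two further cautions. First, your key identity carries the wrong sign for this paper's convention: here $\mathcal{M}=(-1)^{\dim}(c^{Vir}-c^{SM})$, so $c^{SM}(X_i)=c^{Vir}(X_i)-(-1)^{n-d_i}\mathcal{M}(X_i)$, not $+$; since the whole point is a sign-sensitive bookkeeping, this would propagate. Second, the coefficient of $\mathcal{M}(X_i)$ that the computation actually produces is $c^{SM}(X_1)\cdots c^{SM}(X_{i-1})\,c^{Vir}(X_{i+1})\cdots c^{Vir}(X_r)$; this agrees with $a_{1,i}\cdots a_{r-1,i}$ only after interchanging the two cases in the displayed definition of $a_{j,i}$ (as literally printed, for $r=3$ and $i=1$ the product would read $c^{Vir}(X_2)\,c^{SM}(X_2)$, omitting $X_3$ entirely, which cannot be intended). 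So you should not expect to land ``precisely'' on the printed product; part of the verification is recognizing that misprint. Finally, your heuristic that the asymmetry of $a_{j,i}$ reflects an iterated two-at-a-time intersection is not how Theorem \ref{4.1} is obtained (it comes in one step from the diagonal embedding in $M^{(r)}$); the asymmetry is just the arbitrary choice of order in the telescoping above.
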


We focus next on the case when the bundles in question are all line bundles $L_i$, so each $X_i$ is a hypersurface. We get three types of applications:

\vspace{0,2cm}  \noindent
{\bf i)}
A Parusi\'nski-Pragacz type formula for local
complete intersections as above (Corollary \ref{P-P}).
This answers positively the expected description given
by Ohmoto and Yokura in \cite{O-Y} for the  total Milnor class of a local
complete intersection, as a polynomial in the Chern classes
$c_1(L_i),\;i=1,\dots,r.$ We  notice that a remarkable (different) generalization of the Parusi\'nski-Pragacz  formula for complete intersections has been given recently in \cite{MSS}.

\vspace{0,2cm}  \noindent
{\bf ii)}  A link between the Milnor classes of  a local complete intersection
$X$ as above and the global L\^e cycles of the hypersurfaces $X_i$ that define it (Theorem \ref{leformula}).

\vspace{0,2cm}  \noindent
{\bf iii)}  A   description of the total Milnor class of the local complete intersection $X$  in the vein of  Aluffi's formula in \cite{Aluffi} for hypersurfaces, using  Aluffi's $\mu$-classes (Corollary \ref{Aluffi}).

\vspace{0,2cm}

Finally we look at the
 general case: We consider  a holomorphic vector bundle $E$ of rank $r$ over
an $n$-dimensional compact complex
analytic manifold $M$,  and we let   $Z(s)$ be the zero set of a regular holomorphic
section $s$ of $E$. Then $Z(s)$  is an $(n-r)$-dimensional local complete
intersection. We look at the corresponding projectivized  bundle ${\mathbb P}(E^{\vee})$ and notice that the section $s$ induces
a section $\tilde s$ of the tautological bundle ${\mathcal O}_{{\mathbb P}(E^{\vee})}(1)$ over $\mathbb{P}(E^{\vee})$, and therefore it defines a hypersurface $Z(\tilde s)$ in $\mathbb{P}(E^{\vee})$. Then we give a formula expressing the total Milnor class of the local complete intersection $Z(s)$ in terms of the  total Milnor class of the hypersurface $Z(\tilde s)$  and the Chern classes of the various bundles in question. We prove (Theorem \ref{t:final}):

\begin{Thm} Let
$p:\mathbb{P}(E^{\vee})\rightarrow M$ be the projectivization of the
dual bundle $E^{\vee}$, so we have the tautological exact sequence
$$0\rightarrow F\rightarrow p^{*}E\rightarrow {\cal
O}_{\mathbb{P}(E^{\vee})}(1)\rightarrow 0.$$  Then, setting ${\cal
O}(1):={\cal O}_{{\mathbb P}(E^{\vee})} (1)$, we get:
$${\cal M}(Z(s)) = p_{*} \Big(\big[c\big(p^{*}(E^{\vee}) \otimes {\cal O} (1)\big)^{-1} \cdot c_{1}\big({\cal O}(1)\big)^{r-1} \cdot c(F)^{-1} \cdot c_{top}(F)\big] \cap {\cal M}(Z(\tilde{s}))\Big),$$
where $\tilde{s}$ is the section induced by $p^{*}s$ in ${\cal
O}(1)$.
\end{Thm}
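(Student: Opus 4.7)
The plan is to establish the formula by splitting the Milnor class into its Schwartz--MacPherson and Fulton--Johnson components ($\mathcal{M} = c^{SM} - c^{FJ}$, up to the usual sign) and verifying the analogous pushforward identity for each; subtracting them then yields the theorem. Both pieces behave well under the projective bundle projection $p$, but for complementary reasons: $c^{FJ}$ admits an explicit multiplicative expression through the virtual tangent bundle, while $c^{SM}$ is natural under proper maps.

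For the Fulton--Johnson component, the virtual-tangent-bundle formulas give $\iota_{*}c^{FJ}(Z(s)) = c(TM)\,c(E)^{-1}\,c_{r}(E)\cap[M]$ and $\iota_{*}c^{FJ}(Z(\tilde{s})) = c(T\mathbb{P}(E^{\vee}))\,c(\mathcal{O}(1))^{-1}\,h\cap[\mathbb{P}(E^{\vee})]$, where $h := c_{1}(\mathcal{O}(1))$. The relative Euler sequence for $p$ yields $c(T\mathbb{P}(E^{\vee})) = c(p^{*}E^{\vee}\otimes\mathcal{O}(1))\cdot c(p^{*}TM)$, and the tautological sequence yields both $c(F)\cdot(1+h) = c(p^{*}E)$ (hence $c(F)^{-1}\cdot(1+h)^{-1} = c(p^{*}E)^{-1}$) and, since $c_{r}(F) = 0$, the key identity $h\cdot c_{r-1}(F) = p^{*}c_{r}(E)$. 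Substituting these into $p_{*}(\alpha\cap c^{FJ}(Z(\tilde{s})))$, with $\alpha$ the bracketed class in the statement, the factor $c(p^{*}E^{\vee}\otimes\mathcal{O}(1))^{-1}$ cancels the Euler contribution, the product $c(F)^{-1}(1+h)^{-1}$ collapses to $c(p^{*}E)^{-1}$, and the projection formula combined with $p_{*}(h^{r-1}\cap[\mathbb{P}(E^{\vee})]) = [M]$ delivers exactly $\iota_{*}c^{FJ}(Z(s))$.

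For the Schwartz--MacPherson component, use MacPherson's naturality of $c^{SM}$ under the proper morphism $p$, together with the explicit fiber structure of $p|_{Z(\tilde{s})}$. Over $x\in Z(s)$, the section $\tilde{s}$ vanishes on the entire fiber $\mathbb{P}(E^{\vee}_{x})$, so the restriction is a $\mathbb{P}^{r-1}$-bundle; over $x\in M\setminus Z(s)$, $Z(\tilde{s})$ cuts out in the fiber the projective hyperplane of lines in $E^{\vee}_{x}$ annihilating $s(x)$, a $\mathbb{P}^{r-2}$. Hence $p_{*}\mathbbm{1}_{Z(\tilde{s})} = \mathbbm{1}_{Z(s)} + (r-1)\mathbbm{1}_{M}$, giving $p_{*}c^{SM}(Z(\tilde{s})) = \iota_{*}c^{SM}(Z(s)) + (r-1)c(TM)\cap[M]$. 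To incorporate the multiplier $\alpha$, expand it in the basis $\{1, h, \ldots, h^{r-1}\}$ of $A^{*}(\mathbb{P}(E^{\vee}))$ viewed as an $A^{*}(M)$-algebra, apply the projection formula to the $p^{*}$-factors, and evaluate each $p_{*}(h^{i}\cap c^{SM}(Z(\tilde{s})))$ via the Aluffi-type recursion for the SM class of a hypersurface under intersection with its own hyperplane class, a tool already deployed for the hypersurface results in the earlier sections of the paper.

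The main obstacle lies precisely in this last step: since $c^{SM}$ admits no closed multiplicative description analogous to the virtual-tangent-bundle formula for $c^{FJ}$, its interaction with the hyperplane class $h$ must be tracked carefully through the constructible-function calculus. A practical way to bypass this difficulty is to first verify the theorem in the split case $E = L_{1}\oplus\cdots\oplus L_{r}$, in which $Z(s) = Z(s_{1})\cap\cdots\cap Z(s_{r})$ is a transversal intersection of hypersurfaces amenable to Corollary \ref{11} and the Parusi\'nski--Pragacz-type formula of Corollary \ref{P-P}; the explicit form of $\mathcal{M}(Z(\tilde{s}))$ for a hypersurface in $\mathbb{P}(E^{\vee})$ in this trivial setting then matches the pushforward side term by term. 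The general case follows by naturality upon pulling back to the complete flag bundle of $E$, over which $E$ splits and the identity can be descended, with the spurious smooth-part contributions canceling in accordance with the fact that $\mathcal{M}$ is supported on $\mathrm{Sing}(Z(s))$ and $\mathrm{Sing}(Z(\tilde{s}))$ projects into $p^{-1}(\mathrm{Sing}(Z(s)))$.
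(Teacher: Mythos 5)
Your Fulton--Johnson computation is fine (the identities $c(T\mathbb{P}(E^{\vee}))=c(p^{*}E^{\vee}\otimes\mathcal{O}(1))\,c(p^{*}TM)$, $c(F)(1+h)=c(p^{*}E)$, $h\,c_{r-1}(F)=p^{*}c_{r}(E)$ and $p_{*}(h^{r-1}\cap[\mathbb{P}(E^{\vee})])=[M]$ do combine to give $\iota_{*}c^{FJ}(Z(s))$), but the Schwartz--MacPherson half is a genuine gap, and you have in effect conceded it. MacPherson naturality only controls $p_{*}c^{SM}(Z(\tilde{s}))$, not $p_{*}\bigl(h^{i}\cap c^{SM}(Z(\tilde{s}))\bigr)$ for $i\geq 1$, and the ``Aluffi-type recursion for the SM class under intersection with its own hyperplane class'' that you invoke is not established in the paper and is not available in the form you need. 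Your proposed bypass does not close the gap either: for split $E=L_{1}\oplus\cdots\oplus L_{r}$ the variety $Z(s)$ is the intersection of the hypersurfaces $Z(s_{i})$, but nothing guarantees these carry Whitney stratifications meeting transversally, which is the standing hypothesis of Corollary \ref{11} and Corollary \ref{P-P}; and descending the identity from the complete flag bundle requires knowing how both $\mathcal{M}(Z(s))$ and $\mathcal{M}(Z(\tilde{s}))$ transform under the flat pullback to the flag bundle --- which is exactly the nontrivial content you would still have to prove.

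That missing ingredient is precisely what the paper supplies, by a different route that never decomposes $\mathcal{M}$ at the last stage. The paper inserts the intermediate complete intersection $Z(p^{*}s)=p^{-1}(Z(s))$ of codimension $r$ in $\mathbb{P}(E^{\vee})$ and proceeds in two steps: first, Lemma \ref{1} (a flat-pullback formula $p^{*}\mathcal{M}(Z(s))=\frac{c(p^{*}TM)}{c(T\mathbb{P}(E^{\vee}))}\cap\mathcal{M}(Z(p^{*}s))$, proved for the virtual class by flatness and for $c^{SM}$ via Sabbah's characteristic-cycle description together with Lemma \ref{M4} on the behaviour of $[\mathbb{P}(Ch(\mathbbm{1}_{Z(s)}))]$ under pullback), followed by $p_{*}$ and Fulton's Proposition 3.1 to reabsorb the factor $c_{1}(\mathcal{O}(1))^{r-1}$; second, Lemma \ref{2}, which uses the tautological exact sequence $0\to F\to p^{*}E\to\mathcal{O}(1)\to 0$ and Parusi\'nski--Pragacz's Proposition 1.3 to convert $\mathcal{M}(Z(p^{*}s))$ into $c(F)^{-1}c_{top}(F)\cap\mathcal{M}(Z(\tilde{s}))$. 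It is this pair of lemmas --- in particular the characteristic-cycle argument behind Lemma \ref{1} --- that handles the interaction of $c^{SM}$ with the fibre direction, and your proposal has no substitute for it.
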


We are grateful to Nivaldo Medeiros Jr., from
Universidade Federal Fluminense in Brazil, for fruitful
suggestions that helped us a lot to get our results in final form.
We are also grateful to Marcelo Jos\'e Saia, from USP at S\~ao
Carlos-Brazil, for helpful conversations.

\section{Derived Categories}

We  assume some basic knowledge on  derived categories,
hypercohomology and  sheaves of vanishing cycles as described in
\cite{Dimca}.

If $X$ is a complex analytic space then ${\cal D}^{b}_{c}(X)$
denotes the derived category of bounded, constructible complexes of
sheaves of $\C$-vector spaces on $X$. We denote the objects of
${\cal D}^{b}_{c}(X)$ by something of the form $F^{\bullet}$. The
shifted complex $F^{\bullet}[l]$ is defined by
$(F^{\bullet}[l])^{k}=F^{l+k}$ and  its differential is
$d^{k}_{[l]}=(-1)^{l}d^{k+l}$. The constant sheaf $\C_{X}$ on $X$
induces an object $\C_{X}^{\bullet} \in {\cal D}^{b}_{c}(X)$ by
letting $\C_{X}^{0}=\C_{X}$ and $\C_{X}^{k}=0$ for $k\neq 0$.

If $h:X\rightarrow \C$ is an analytic map and $F^{\bullet}\in
{\cal D}^{b}_{c}(X)$, then we denote the sheaf of vanishing cycles
of $F^{\bullet}$  with respect to $h$ by $\phi_{h}F^{\bullet}$.

For $F^{\bullet}\in {\cal D}^{b}_{c}(X)$ and $p \in X$, we denote
by ${\cal H}^{*}(F^{\bullet})_{p}$ the stalk cohomology of
$F^{\bullet}$ at $p$, and by $\chi(F^{\bullet})_{p}$ its Euler
characteristic. That is,
$$\chi(F^{\bullet})_{p}=\sum_{k}(-1)^{k}{\rm dim}_{\C}{\cal
H}^{k}(F^{\bullet})_{p}.$$
 We also denote by $\chi(X,F^{\bullet})$ the Euler characteristic
of $X$ with coefficients in $F^{\bullet}$, {\it i.e.},
$$\chi(X,F^{\bullet})=\sum_{k}(-1)^{k}{\rm dim}_{\C}\mathbb{H}^{k}(X,F^{\bullet}),$$
where $\mathbb{H}^{*}(X,F^{\bullet})$ denotes the hypercohomology
groups of $X$ with coefficients in $F^{\bullet}$.

When $F^{\bullet}\in {\cal D}^{b}_{c}(X)$ is ${\cal
S}$-constructible, where ${\cal S}$ is a Whitney stratification of
$X$, we  denote it by $F^{\bullet}\in {\cal D}^{b}_{{\cal S}}(X)$.
We have
\cite[Theorem 4.1.22]{Dimca}:
\begin{equation}\label{EulerCharact}\chi(X,F^{\bullet})=\sum_{S\in {\cal
S}}\chi(F^{\bullet}_{S})\chi(S),\end{equation} where
$\chi(F^{\bullet}_{S})=\chi(F^{\bullet})_{p}$ for an arbitrary point
$p \in S$.

For a subvariety $X$ in a complex manifold  $M$ we denote its conormal variety by
$T^{*}_{X}M$. That is, $$T^{*}_{X}M:={\rm closure}\;\{ (x, \theta) \in T^{*}M\;|\; x
\in X_{{\rm reg}}\;{\rm and}\; \theta_{|_{T_{x} X_{{\rm
reg}}}}\equiv 0\}\,,$$ where $T^{*}M$ is the cotangent bundle of $M$ and
$X_{{\rm reg}}$ is the regular part of $X$.

The following definition is standard in the literature:

\begin{definition}
Let $ X $ be an analytic subvariety of a complex manifold $ M $, $\{
S_{\alpha} \}$ a Whitney stratification of $M$ adapted to $ X $ and
$x\in S_\alpha$ a point in $X$. Consider $g:(M,x)\rightarrow (\C,0)$
a germ of holomorphic function such that $d_{x}g $ is a {\it
non-degenerate covector} at $x$ with respect to the fixed
stratification. That is, $d_{x}g \in T^{*}_{S_\alpha}M$ and $d_{x}g
\not\in T^{*}_{S^{'}}M$  for all stratum $S^{'} \neq S_\alpha$.  And
let $N$ be  a germ of a closed complex submanifold of $M$ which is
transversal to $S_\alpha$, with $N \cap S_\alpha=\{ x\}$. Define the
{\it complex link }  $l_{S_\alpha}$  of $S_\alpha$ by:
 \begin{center}$l_{S_\alpha}:= X\cap N \cap
B_{\delta}(x)\cap \{g=w\}\quad{\rm for}\;0<|w|<\!\!< \delta<\!\!<
1.$\end{center} The {\it normal Morse datum} of $S_\alpha$ is
defined by:
$$NMD(S_\alpha):=(X\cap N \cap B_{\delta}(x),l_{S_\alpha}),$$ and the {\it normal Morse index} $\eta(S_\alpha,F^{\bullet})$  of the stratum is:
\begin{center}$\eta(S_\alpha,F^{\bullet}):=\chi(NMD(S),F^{\bullet}),$\end{center}
where the right-hand-side means the Euler characteristic of the
relative hypercohomology. \end{definition}

By a result of M. Goresky and R. MacPherson in \cite[Theorem 2.3]{GM}  we
get that the number $\eta(S_\alpha,F^{\bullet})$ does not depends on
the choices of $x\in S_\alpha,\; g$ and $N$.

Notice that by \cite[Remark 2.4.5(ii)]{Dimca}, it follows that
\begin{equation}\label{RelativeEuler}\eta(S_\alpha,F^{\bullet})=\chi(X\cap N \cap
B_{\delta}(x),F^{\bullet})-\chi(l_{S_\alpha},F^{\bullet})\,.\end{equation}

\begin{remark}\label{ConstructibleFunction} Everything we have defined so far
for a constructible complex of sheaves is defined by J. Sch\"{u}rmann
and  M. Tib\u ar in \cite{ST} for constructible functions, and the two
 constructions are somehow equivalent. In fact, given
$F^{\bullet}\in {\cal D}^{b}_{c}(X)$, we have naturally associated
the  constructible function on $X$ given by
$$\beta(p)
=\chi(F^{\bullet})_{p}.$$ Moreover, by Sch\"{u}rmann \cite{S}, the
converse also holds, {\it i.e.}, given any \linebreak constructible
function $\beta$ on $X$ there is $F^{\bullet}\in {\cal
D}^{b}_{c}(X)$ such that
$$\beta(p) =\chi(F^{\bullet})_{p}.$$
\end{remark}

\section{Milnor classes}

\hspace{0.5cm} Let $M$ be an $n$-dimensional compact complex
analytic manifold and let $E$ be a holomorphic vector bundle over
$M$ of rank $d$. Let $Y$ be the zero set of a regular holomorphic
section of $E$, which is an $(n-d)$-dimensional local complete
intersection. Consider the virtual bundle $\tau(Y;M):= T
M|_{_{Y}}-E|_{_{Y}}$, where $T M$ denotes the tangent bundle of $
M$ and the difference is in the  KU-theory of $Y$. The virtual
homology class of $Y$ is defined by the Chern class of  $\tau(Y;M)$
via the Poincar\'e morphism, that is,
$$c^{Vir}(Y;M)=c(\tau(Y;M))\cap
[Y]:=(c^{}(T M|_Y)\cdot c^{}(E|_Y)^{-1})\cap [Y].$$
 Notice that restricted to
the regular part of $Y$, the bundle $E$ is isomorphic to the normal
bundle of $Y$ in $M$. Notice also that in the case we envisage here,
these classes coincide with the Fulton-Jhonson classes (see for
instance \cite{Ful}).

When there is no ambiguity, for simplicity we will denote the virtual bundle and the virtual  classes simply by $\tau(Y)$ and $c^{Vir}(Y)$.

On the other hand, consider the Nash blow up $\tilde{Y} \stackrel{\nu}{\rightarrow}
 Y$ of  $Y$, its Nash bundle ${\tilde T} \stackrel{\pi}{\rightarrow}
\tilde{ Y} $ and the Chern classes of $\tilde{T}$, $c^{j}(\tilde{T})
\in H^{2j}(\tilde{ Y})$, $j=1,\cdots,n$. The Mather classes of $Y$ are defined by
$$c^{Ma}_{k}( Y):=v_{*}(c^{n-d-k}(\tilde{T})\cap [\tilde{ Y}])\in H_{2k}( Y),\;\;k=0,\cdots,n \,.$$
We equip $Y$ with a Whitney stratification $Y_{\alpha}$.
The MacPherson classes are obtained from the Mather classes by considering
appropriate ``weights" for each stratum, determined by the local
Euler obstruction ${\rm Eu}_{{ Y_{\alpha}}}(x)$.  This is an integer associated   in \cite{MacP}
to each point $x \in Y_{\alpha}$.  It is proved \cite{MacP} that
there exists a unique set of integers $b_{\alpha}$ for which the
equation $\sum b_{\alpha} {\rm Eu}_{\bar{ Y}_{\alpha}}(x)=1$ is
satisfied for all points $x \in  Y$. Here, $\bar{ Y}_{\alpha}$ denotes the closure of the stratum, which is itself analytic and therefore it has its own local
Euler obstruction ${\rm Eu}_{\bar{ Y}_{\alpha}}$ and Mather classes, and the sum runs over all
strata $ Y_{\alpha}$ containing $x$ in their closure.

Then the  MacPherson class of degree $k
$ is defined by
$$c^{M}_{k}( Y):=\sum
b_{\alpha}\;i_{*}(c^{Ma}_{k}(\bar{ Y}_{\alpha})),$$
where  and
$i:\bar{ Y}_{\alpha}\hookrightarrow  Y$ is the inclusion map.

We remark that by \cite{BS}, the MacPherson classes coincide,
up to Alexander duality, with the classes defined previously by
M.-H. Schwartz in \cite{Sch}. Thus, following the modern
literature (see for instance \cite{Par-Pr, BLSS2, BSS}), we call
these the  Schwartz-MacPherson classes of $Y$ and  denote them
 by $c^{SM}_{k}(Y)$.

\begin{definition}
The Milnor class of $Y$ is:
$${\cal M}(Y):=(-1)^{n-d}\left(c^{Vir}(Y)-c^{SM}(Y) \right).$$
\end{definition}

\section{Milnor classes and the diagonal embedding}

\vspace{0,3cm}  Let $M$ be  as before, an $n$-dimensional
compact complex analytic manifold, and set $M^{(r)}:=M\times \cdots \times M$. In this section we let $E$ be a holomorphic vector
bundle over  of rank $d$. Consider
$\Delta: M\rightarrow M^{(r)}$ the diagonal morphism, which is a
regular embedding of codimension $nr-n$. Following \cite[Chapter
6]{Ful} we have that $\Delta$ induces the Gysin homomorphism
$$\Delta^{!}:A_{k}(M^{(r)})\rightarrow A_{k-nr+n}(M),$$ given by
$\Delta^{!}({\alpha}_1\times\cdots \times
{\alpha}_r)={\alpha}_1\cdots {\alpha}_r,$ where the right-hand side
is the intersection product of cycles as defined in \cite[Chapter
8]{Ful}. Topologically this homomorphism can be described by
capping with the orientation class in $H^{2(r-1)n}(M^{(r)},M^{(r)}-M)$ given
by the regular embedding $\Delta: M\rightarrow M^{(r)}.$

Let $t$ be a regular holomorphic section of $E$. Hence the set of
the zeros of $t$, $Z(t)$, is a closed subvariety of $M^{(r)}$ of
dimension $nr-d$.

\begin{proposition}\label{t:1} The Gysin morphism satisfies: $$\Delta^{!}\left( \;c^{Vir}(Z(t))\;\right)=
c\left( \left( TM|_{Z(\Delta^{*}t)}\right)^{\oplus r-1} \right)\cap c^{Vir}(Z(\Delta^{*}t))\,.$$\end{proposition}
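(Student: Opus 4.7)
The plan is to unpack the definition of the virtual Chern class and move everything through the Gysin morphism using the standard functorial properties collected in \cite[Ch.~6--8]{Ful}. By definition,
$$c^{Vir}(Z(t)) \;=\; c\bigl(TM^{(r)}|_{Z(t)}\bigr)\cdot c\bigl(E|_{Z(t)}\bigr)^{-1}\cap [Z(t)],$$
a class in $A_{*}(Z(t))$. Since $\Delta^{!}$ commutes with Chern-class operators of pulled-back bundles and is compatible with refined cap products, applying it gives
$$\Delta^{!}\bigl(c^{Vir}(Z(t))\bigr)\;=\;c\bigl(\Delta^{*}TM^{(r)}|_{Z(\Delta^{*}t)}\bigr)\cdot c\bigl(\Delta^{*}E|_{Z(\Delta^{*}t)}\bigr)^{-1}\cap \Delta^{!}[Z(t)].$$

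Next I would identify the three factors appearing on the right. The pullback of the tangent bundle of the product along the diagonal decomposes as $\Delta^{*}TM^{(r)}\cong TM^{\oplus r}$, hence $c(\Delta^{*}TM^{(r)})=c(TM)^{r}$. The pullback of $E$ along $\Delta$ is just $\Delta^{*}E$, whose total Chern class inversion appears directly. For the fundamental class, since $t$ is a regular section we have $[Z(t)] = c_{\mathrm{top}}(E)\cap [M^{(r)}]$ in the refined sense, and applying $\Delta^{!}$ together with $\Delta^{!}[M^{(r)}]=[M]$ yields $\Delta^{!}[Z(t)] = c_{\mathrm{top}}(\Delta^{*}E)\cap [M] = [Z(\Delta^{*}t)]$.

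To conclude, I would split off one copy of $c(TM)$ from $c(TM)^{r}$, writing $c(TM)^{r}=c(TM^{\oplus(r-1)})\cdot c(TM)$, so that the right-hand side of the Gysin computation becomes
$$c\bigl(TM^{\oplus(r-1)}|_{Z(\Delta^{*}t)}\bigr)\cdot\Bigl(c\bigl(TM|_{Z(\Delta^{*}t)}\bigr)\cdot c\bigl(\Delta^{*}E|_{Z(\Delta^{*}t)}\bigr)^{-1}\cap [Z(\Delta^{*}t)]\Bigr),$$
and the expression in the outer parenthesis is, by definition, $c^{Vir}(Z(\Delta^{*}t))$. This yields the claimed formula.

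The main obstacle I anticipate is justifying $\Delta^{!}[Z(t)]=[Z(\Delta^{*}t)]$ together with the well-posedness of $c^{Vir}(Z(\Delta^{*}t))$: this amounts to seeing $\Delta^{*}t$ as a regular section of $\Delta^{*}E$ with zero locus of the expected dimension $n-d$. I would derive this either directly from the hypotheses of the section (where $t$ is assumed regular and the diagonal meets $Z(t)$ in the expected codimension), or, in the more general setting, interpret both sides through Fulton's refined intersection theory, where the identity becomes a tautological consequence of the compatibility of Chern classes with Gysin pullback. All remaining steps are mechanical manipulations of Chern classes and the projection formula.
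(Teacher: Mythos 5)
Your argument is correct and follows essentially the same route as the paper: unpack the definition of $c^{Vir}$, commute $\Delta^{!}$ past the Chern-class operators via \cite[Proposition 6.3]{Ful}, identify $\Delta^{!}[Z(t)]=[Z(\Delta^{*}t)]$, and split $c(\Delta^{*}TM^{(r)})=c(TM^{\oplus(r-1)})\cdot c(TM)$. The only (immaterial) difference is that the paper justifies $\Delta^{!}[Z(t)]=[Z(\Delta^{*}t)]$ by citing \cite[Proposition 14.1]{Ful} directly rather than via the localized top Chern class of $E$.
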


\begin{proof}
By definition of the virtual class we have $$\Delta^{!}\;c^{Vir}(Z(t))=\Delta^{!}\left(
c\left( TM^{(r)}|_{Z(t)}\right)\cdot c\left(
{E}|_{Z(t)}\right)^{-1} \cap [Z(t)] \right).
$$
Hence, by \cite[Proposition 6.3]{Ful}, we have that
$$\Delta^{!}\;c^{Vir}(Z(t))=c\left(\Delta^{*}\left(
TM^{(r)}|_{Z(t)}\right)\right)\cdot c\left(
\Delta^{*}\left({E}|_{Z(t)}\right)\right)^{-1} \cap
\Delta^{!}\;[Z(t)].$$
Note that
$\Delta^{*}\left({E}|_{Z(t)}\right)=\Delta^{*}{E}|_{Z(\Delta^{*}t)}$
and, by \cite[Proposition 14.1]{Ful}, $\Delta^{!}\;[Z(t)]=
[Z(\Delta^{*}t)]$.
Moreover, since $\Delta^{*} TM^{(r)}= TM\oplus \cdots \oplus TM$, we
have $$c\left(\Delta^{*}\left(
TM^{(r)}|_{Z(t)}\right)\right)=c\left(\left(
TM|_{Z(\Delta^{*}t)}\right)^{\oplus\;r}\right).$$

Then,
\begin{center}$\left.\begin{array}{rcl}\Delta^{!}\;c^{Vir}(Z(t))&=&c\left(\left(
TM|_{Z(\Delta^{*}t)}\right)^{\oplus\;r}\right)\cdot
c\left(\Delta^{*}E|_{Z(\Delta^{*}t)}\right)^{-1}\cap \;[Z(\Delta^{*}t)]\\
~&=& c\left( \left( TM|_{Z(\Delta^{*}t)}\right)^{\oplus r-1}
\right)\cap c^{Vir}(Z(\Delta^{*}t)).\\
\end{array}\right.$
\end{center}
\end{proof}

For a subvariety $X$ of $M$, we denote its conormal variety by
$T^{*}_{X}M$, that is, $T^{*}_{X}M:={\rm closure}\;\{ (x, \theta) \in T^{*}M\;|\; x
\in X_{{\rm reg}}\;{\rm and}\; \theta|_{_{T_{x} X_{{\rm
reg}}}}\equiv 0\}$, where $T^{*}M$ is the cotangent space of $M$ and
$X_{{\rm reg}}$ is the regular part of $X$. Let $L( M)$ be the free abelian group of all cycles generated by the
conormal spaces $T^{*}_{X} M$, where $X$ varies over all subvarieties of $ M$.

Let  $\{ S_{\alpha} \}$ be a Whitney stratification of $M$
adapted to $ X $ and $x\in S_\alpha$ a point in $X$. Let $F(M)$ be the free
abelian group of constructible functions on
$M$.
Define the function
$Ch: F( M)\rightarrow L( M)$ by:
$$Ch(\xi):=\sum_{\alpha} (-1)^{\dim S_\alpha}\eta(S_\alpha,\xi)\cdot  T^{*}_{\overline{S}_\alpha} M.$$

Now consider the projectivized cotangent bundles $\mathbb{P}(T^{*}M)$ and  $\mathbb{P}(T^{*}(M^{(r)})$; we set $\mathbb{P}(T^{*}M\oplus \cdots \oplus T^{*}M):=\mathbb{P}((T^{*}M)^{\oplus r})$. Notice that one has a fibre square
 diagram (see \cite[pag. 428]{Ful}):
$$
\xymatrix {    \mathbb{P}((T^{*}M)^{\oplus r})
\ar[r]^{\delta} \ar[d]_{p} & \mathbb{P}(T^{*}(M^{(r)}))  \ar[d]^{\pi^{(r)}} \\ M \ar[r]^{\Delta}  & M^{(r)}  }
$$
where $\pi^{(r)}$ is the natural proper map. Let $i: \mathbb{P}(T^{*}M) \to \mathbb{P}((T^{*}M)^{\oplus r}) $ be the morphism
 induced by the diagonal embedding
 $T^{*}M \to T^{*}M \oplus \cdots \oplus T^{*}M$.

\begin{lemma}\label{L:4} Let $Z(t)$ be as in Proposition \ref{t:1}. Assume that $Z(t)$ admits a Whitney stratification $\{S_\alpha\}$ transversal to
$\Delta(M)$. Then:
$$\delta^{!}\;[\mathbb{P}(Ch({\mathbbm 1}_{Z(t)}))]=(-1)^{nr-n}\; i_{*}\;[\mathbb{P}(Ch({\mathbbm 1}_{Z(\Delta^{*}t)}))] \,,$$
where ${\mathbbm 1}_{(\,)}$ denotes the characteristic function.
\end{lemma}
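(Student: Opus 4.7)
The plan is to expand both sides of the asserted identity as finite sums over strata and match the contributions term by term, using the transversality hypothesis as the input that makes the matching work. This mirrors the philosophy that, under non-characteristic/transversal restriction, characteristic cycles pull back to characteristic cycles; here everything lives after projectivizing, and the ``restriction'' is the Gysin map $\delta^!$ attached to the fibre square with the diagonal.

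First, I would refine the Whitney stratification $\{S_\alpha\}$ of $Z(t)$ (compatibly with $\Delta(M)$) so that its trace on $\Delta(M)$ yields a Whitney stratification of $Z(\Delta^{*}t)\cong Z(t)\cap\Delta(M)$, with strata $T_\alpha:=S_\alpha\cap\Delta(M)$; by transversality, $\dim T_\alpha=\dim S_\alpha-(nr-n)$ whenever $T_\alpha\neq\emptyset$. Then by the definition of $Ch$, the left-hand side is $\sum_\alpha (-1)^{\dim S_\alpha}\,\eta(S_\alpha,\mathbbm{1}_{Z(t)})\cdot\delta^{!}[\mathbb{P}(T^{*}_{\bar S_\alpha}M^{(r)})]$, and the right-hand side is $(-1)^{nr-n}\sum_\alpha (-1)^{\dim T_\alpha}\,\eta(T_\alpha,\mathbbm{1}_{Z(\Delta^{*}t)})\cdot i_{*}[\mathbb{P}(T^{*}_{\bar T_\alpha}M)]$. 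The sign factor is already reconciled by the dimension shift $(-1)^{\dim S_\alpha}=(-1)^{nr-n}(-1)^{\dim T_\alpha}$, so it remains to show, stratum by stratum, that
\[
\delta^{!}[\mathbb{P}(T^{*}_{\bar S_\alpha}M^{(r)})] \;=\; i_{*}[\mathbb{P}(T^{*}_{\bar T_\alpha}M)] \quad\text{and}\quad \eta(T_\alpha,\mathbbm{1}_{Z(\Delta^{*}t)})=\eta(S_\alpha,\mathbbm{1}_{Z(t)}).
\]

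For the geometric identity between the projectivized conormal cycles, I would work in the fibre square. Transversality of $\bar S_\alpha$ to $\Delta(M)$ implies that, over $\bar T_\alpha$, the pullback bundle $p^{*}(T^{*}M)^{\oplus r}|_{\bar T_\alpha}$ decomposes as $T^{*}M|_{\bar T_\alpha}\oplus N^{*}$, where $N^{*}$ is identified with the conormal directions to $\Delta(M)$ inside $M^{(r)}$ restricted to $\bar T_\alpha$, and the fibre of $T^{*}_{\bar S_\alpha}M^{(r)}$ over a regular point of $\bar T_\alpha$ meets $T^{*}M\hookrightarrow (T^{*}M)^{\oplus r}$ cleanly in the conormal to $\bar T_\alpha$ inside $M$. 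This is exactly the condition under which the Gysin pullback $\delta^{!}$ has no excess-intersection contribution, and it produces $i_{*}[\mathbb{P}(T^{*}_{\bar T_\alpha}M)]$ on the nose. For the equality of normal Morse indices, I would use the transversality to transport a normal slice $N'$ to $T_\alpha$ in $M$ to a normal slice $N=N'\times M\times\cdots\times M$-type slice to $S_\alpha$ in $M^{(r)}$ (after shrinking), check that a non-degenerate covector at $x$ for the refined stratification in $M$ extends to one in $M^{(r)}$, and observe that $Z(\Delta^{*}t)\cap N'$ and $Z(t)\cap N$ agree as germs; the equality of Euler characteristics of the normal Morse data then follows.

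The hard part will be making the conormal-cycle identification in the second paragraph rigorous: one has to control the possibly non-proper intersection inside the total space $(T^{*}M)^{\oplus r}$ and show that transversality at the level of base varieties lifts to a clean (excess-free) intersection at the level of conormal varieties and survives the projectivization before invoking $\delta^{!}$. Apart from this, the normal Morse invariance and the sign bookkeeping are standard consequences of Whitney transversality and the dimension formula; the rest is just linearity of $Ch$ and of $\delta^{!}$.
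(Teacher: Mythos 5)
Your proposal is correct and follows essentially the same route as the paper: both sides are expanded as sums of projectivized conormal cycles over strata, the coefficients are matched up to the sign $(-1)^{nr-n}$ coming from the codimension of $\Delta(M)$, and the stratum-wise identity $\delta^{!}[\mathbb{P}(\overline{T^{*}_{S_\alpha}M^{(r)}})]=i_{*}[\mathbb{P}(\overline{T^{*}_{\Delta^{-1}(S_\alpha)}M})]$ is taken as the geometric input, which the paper likewise asserts without further justification. The only difference is cosmetic: you match coefficients via normal Morse indices and a transported normal slice, whereas the paper writes them as Euler characteristics of vanishing cycles of a germ $f$ chosen so that $\Delta^{*}f=g$ and uses compatibility of $\phi$ with the pullback $\Delta^{*}$ --- equivalent bookkeeping, per the paper's own remark identifying the constructible-function and sheaf-theoretic formulations.
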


\begin{proof} Since the stratification $\{S_{\alpha}\}$  is transversal to $\Delta(M)$, we have that
$\{\Delta^{-1}(S_{\alpha})\}$ is a Whitney stratification of
$Z(\Delta^{*}t)$.  By definition, $$\mathbb{P}(Ch({\mathbbm 1}_{Z(t)}))=\sum
m_{\alpha}\mathbb{P}\left(\overline{T_{S_{\alpha}}^{*}M^{(r)}}\right),$$
 where
$m_{\alpha}:= (-1)^{nr-d-1}\chi\left(\phi_{f|Z(t)}F^{\bullet}
\right)_{z}$, with $z \in S_{\alpha}$, $F^{\bullet}$ the complex sheaf
defined by $\chi( F^{\bullet})_{p}={\mathbbm 1}_{Z(t)}(p)$, a germ $f:
(M^{(r)},z)\rightarrow (\C,0)$ such that satisfies $(z,d_{z}f) \not\in
\overline{T_{S_{j}}^{*}M^{(r)}}$ for all strata $S_{\beta} \neq S_{\alpha}$ and
$\phi_{f|Z(t)}F^{\bullet}$ is the sheaf of  vanishing cycles of $f$ restricted to $Z(t)$.
Analogously,
$$\mathbb{P}(Ch({\mathbbm 1}_{Z(\Delta^{*}t)}))=\sum
n_{\alpha}\mathbb{P}\left(\overline{T_{\Delta^{-1}(S_\alpha)}^{*}M}\right),$$
where $n_{\alpha}:=
(-1)^{n-d-1}\chi\left(\phi_{g|Z(\Delta^{*}t)}G^{\bullet}
\right)_{x}$, with $x \in \Delta^{-1}(S_{\alpha})$, $G^{\bullet}$ the
complex sheaf defined by $\chi(
G^{\bullet})_{q}={\mathbbm 1}_{Z(\Delta^{*}t)}(q)$ and a germ $g:
M,x\rightarrow \C,0$ such that satisfies $(x,d_{x}g) \not\in
\overline{T_{\Delta^{-1}(S_{\beta})}^{*}M}$ for all strata $S_{\beta} \neq
S_{\alpha}$.

\vspace{0.2cm}These definitions do not depend on the choices of $
(z,f) $ and $(x,g) $ respectively. So we fix $ x $ and $ g $ in the
second definition, and take $ z = (x,\cdots,x) $ and $ f $ such that
$\Delta^{*}f=g$.

Note that, since $\Delta^{*}F^{\bullet}=G^{\bullet}$ and
$$\phi_{\Delta^{*}(f|Z(t))}\Delta^{*}F^{\bullet}=\Delta^{*}\left(\phi_{f|Z(t)}F^{\bullet}\right)\,,$$
we have: $$\chi\left(\phi_{g|Z(\Delta^{*}t)}G^{\bullet}
\right)_{x}=\chi\left(\phi_{f|Z(t)}F^{\bullet}
\right)_{x,\cdots,x}\,.$$ Hence
\begin{equation}\label{6}m_{\alpha}=(-1)^{nr-n} n_{\alpha}. \end{equation}
Notice that
\begin{equation}\label{7}\delta^{!}\;[\mathbb{P}\left(\overline{T_{S_{i}}^{*}M^{(r)}}\right)]=
i_{*}\;[\mathbb{P}\left(\overline{T_{\Delta^{-1}(S_{i})}^{*}M}\right)].\end{equation}
Therefore, by equations (\ref{6}) and (\ref{7}),
$$\delta^{!}\;[\mathbb{P}(Ch({\mathbbm 1}_{Z(t)}))]=(-1)^{nr-n}\;
i_{*}\;[\mathbb{P}(Ch({\mathbbm 1}_{Z(\Delta^{*}t)}))].$$\end{proof}

\begin{theorem}\label{T:2} With the assumptions of Lemma \ref{L:4} we have: $$\Delta^{!}\left( \;c^{SM}(Z(t))\;\right)=
c\left( \left( TM|_{Z(\Delta^{*}t)}\right)^{\oplus r-1} \right)\cap c^{SM}(Z(\Delta^{*}t))\,.$$\end{theorem}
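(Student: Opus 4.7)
The plan is to follow the same template as Proposition \ref{t:1}, but replacing the tautological Chern-class description of $c^{Vir}$ by the characteristic-cycle description of $c^{SM}$. Lemma \ref{L:4} is precisely the characteristic-cycle analogue of the Fulton identity $\Delta^{!}[Z(t)] = [Z(\Delta^{*}t)]$ used in the proof of Proposition \ref{t:1}, so once the left-hand side is rewritten via $\mathbb{P}(Ch(\mathbbm 1_{Z(t)}))$, the argument proceeds in parallel.

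First, I would invoke the Kashiwara--Sabbah formula (see also Parusi\'nski--Pragacz and Sch\"urmann) expressing the signed Schwartz--MacPherson class of a constructible function $\alpha$ on an $N$-dimensional complex manifold $W$ as a Chern-class-weighted push-forward from $\mathbb{P}(T^{*}W)$: schematically,
$$c^{SM}_{*}(\alpha) \;=\; c(TW)\cap \pi^{\mathbb{P}}_{W,*}\Bigl(\Psi_{N}\bigl(c_{1}(\mathcal{O}_{\mathbb{P}(T^{*}W)}(1))\bigr)\cap [\mathbb{P}(Ch(\alpha))]\Bigr),$$
for a universal polynomial $\Psi_{N}$, where $\pi^{\mathbb{P}}_{W}:\mathbb{P}(T^{*}W)\to W$ is the bundle projection. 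Applying this with $W=M^{(r)}$ and $\alpha = \mathbbm 1_{Z(t)}$, and then applying $\Delta^{!}$, I would use compatibility of refined Gysin morphisms with proper push-forward along the fibre square preceding Lemma \ref{L:4} (see \cite[Ch.~6]{Ful}): $\Delta^{!}\pi^{(r)}_{*} = p_{*}\delta^{!}$, together with $\delta^{*}\mathcal{O}_{\mathbb{P}(T^{*}M^{(r)})}(1) = \mathcal{O}_{\mathbb{P}((T^{*}M)^{\oplus r})}(1)$ and $\Delta^{*}TM^{(r)} = (TM)^{\oplus r}$, to rewrite $\Delta^{!}c^{SM}(Z(t))$ as a push-forward along $p$ of the analogous expression on $\mathbb{P}((T^{*}M)^{\oplus r})$, built from $\delta^{!}[\mathbb{P}(Ch(\mathbbm 1_{Z(t)}))]$ and carrying the extra prefactor $c((TM|_{Z(\Delta^{*}t)})^{\oplus r})$ in place of $c(TM|_{Z(\Delta^{*}t)})$.

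Lemma \ref{L:4} then replaces $\delta^{!}[\mathbb{P}(Ch(\mathbbm 1_{Z(t)}))]$ by $(-1)^{nr-n}\, i_{*}[\mathbb{P}(Ch(\mathbbm 1_{Z(\Delta^{*}t)}))]$, and the projection formula for the closed embedding $i$, combined with $i^{*}\mathcal O(1) = \mathcal O_{\mathbb{P}(T^{*}M)}(1)$, reduces the whole expression to a push-forward from $\mathbb{P}(T^{*}M)$ of exactly the shape given by the Kashiwara--Sabbah formula applied directly to $\mathbbm 1_{Z(\Delta^{*}t)}$ on $M$. The only residual discrepancies are the Chern-class ratio $c((TM)^{\oplus r})/c(TM) = c((TM|_{Z(\Delta^{*}t)})^{\oplus r-1})$, which produces the prefactor in the theorem, and the sign $(-1)^{nr-n}$, which cancels against the $(-1)^{\dim}$ convention built into $\Psi_{N}$ because $\dim Z(t) - \dim Z(\Delta^{*}t) = nr-n$.

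The main obstacle, and the real technical bookkeeping of the proof, is verifying that the universal polynomial $\Psi_{N}$ behaves compatibly under the dimension shift from $N=nr$ to $N=n$ when the push-forward is rerouted through $\delta$ and $i$. Concretely, one must check that the explicit polynomial in $c_{1}(\mathcal{O}(1))$ defining $\Psi_{nr}$, when pulled back through $i:\mathbb{P}(T^{*}M)\hookrightarrow \mathbb{P}((T^{*}M)^{\oplus r})$ and pushed through $p$, reduces to $\Psi_{n}$ up to the sign and the excess Chern-class factor identified above, with no leftover terms. Once this combinatorial check is in place, the overall structure is entirely parallel to the proof of Proposition \ref{t:1}.
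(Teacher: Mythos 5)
Your proposal follows essentially the same route as the paper's proof: Sabbah's characteristic-cycle description of $c^{SM}$, the compatibility $\Delta^{!}\pi^{(r)}_{*}=p_{*}\,\delta^{!}$ from \cite[Theorem 6.2]{Ful} together with $\delta^{*}\mathcal{O}_{r}(1)=\mathcal{O}_{\mathbb{P}((T^{*}M)^{\oplus r})}(1)$ and $\Delta^{*}TM^{(r)}=(TM)^{\oplus r}$, then Lemma \ref{L:4} and the projection formula for $i$. The ``universal polynomial'' bookkeeping you single out as the main obstacle is in fact vacuous in the formulation the paper uses: Sabbah's weight is simply $c(\mathcal{O}(1))^{-1}$, independent of the ambient dimension, so the only dimension-dependent ingredient is the overall sign ($(-1)^{nr-1}$ versus $(-1)^{n-1}$), which cancels against the $(-1)^{nr-n}$ coming from Lemma \ref{L:4} exactly as you predict.
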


\begin{proof} By the description of Schwartz-MacPherson classes due to C.
Sabbah in \cite{Sabbah}, we have that
$$c^{SM}(Z(t))=(-1)^{nr-1}c\left( TM^{(r)}|_{Z(t)} \right) \cap
\pi_{*}^{(r)} \left( c({\cal O}_{r}(1)^{-1})\cap
[\mathbb{P}(Ch({\mathbbm 1}_{Z(t)}))]\right),$$
where ${\cal O}_{r}(1)$ denotes
the tautological line bundle on the projectivisation
$\mathbb{P}(T^{*}M^{(r)})\rightarrow M^{(r)}$.
Using  again \cite[Proposition 6.3]{Ful} we get:
\begin{equation}\label{8}{\small \Delta^{!}\;c^{SM}(Z(t))=(-1)^{nr-1}c\left(\Delta^{*}\left(
TM^{(r)}|_{Z(t)} \right)\right) \cap \Delta^{!}\pi_{*}^{(r)} \left(
c({\cal O}_{r}(1))^{-1})\cap
[\mathbb{P}(Ch({\mathbbm 1}_{Z(t)}))]\right).}\end{equation}
And by \cite[Theorem 6.2]{Ful} we have,
\begin{equation}\label{9}\Delta^{!}\pi_{*}^{(r)} \left( c({\cal
O}_{r}(1))^{-1})\cap [\mathbb{P}(Ch({\mathbbm 1}_{Z(t)}))]\right)= p_{*} \left(
c(\delta^{*}{\cal O}_{r}(1))^{-1})\cap
\delta^{!}\;[\mathbb{P}(Ch({\mathbbm 1}_{Z(t)}))]\right).\end{equation}
Since $\delta^{*}{\cal O}_{r}(1)= {\cal
O}_{\mathbb{P}\left((T^{*}M)^{\oplus r} \right)}(1)$ is the
tautological line bundle on the projectivisation
$\mathbb{P}((T^{*}M^{(r)})^{\oplus r})\rightarrow M^{(r)}$, by Lemma \ref{L:4} and
the equations (\ref{8}) and (\ref{9}), we get:
$$\begin{array}{l}
\Delta^{!}\left( \;c^{SM}(Z(t))\;\right)= (-1)^{n-1}c\left( \left(
TM|_{Z(\Delta^{*}t)}\right)^{\oplus r} \right)\cap\\\quad \quad\quad\quad\quad\quad\quad \,\;\cap\; p_{*} \left(
c({\cal O}_{\mathbb{P}\left((T^{*}M)^{\oplus r}
\right)}(1))^{-1}\cap
i_{*}\;[\mathbb{P}(Ch({\mathbbm 1}_{Z(\Delta^{*}t)}))]\right).\end{array}$$
Using again the description of Schwartz-MacPherson due to Sabbah ({\it loc. cit.}) we obtain:
$$\Delta^{!}\left( \;c^{SM}(Z(t))\;\right)=c\left( \left( TM|_{Z(\Delta^{*}t)}\right)^{\oplus r-1}
\right)\cap c^{SM}(Z(\Delta^{*}t)).$$

\end{proof}

\begin{corollary} \label{10}With the previous assumptions we have: $$\Delta^{!}{\cal M}(Z(t))=(-1)^{nr-n}c\left( \left( TM|_{Z(\Delta^{*}t)}\right)^{\oplus
r-1} \right)\cap{\cal M}(Z(\Delta^{*}t))\,.$$\end{corollary}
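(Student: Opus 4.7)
The plan is straightforward: the corollary is essentially a combination of Proposition \ref{t:1} and Theorem \ref{T:2}, together with the definition of the Milnor class, after a short sign bookkeeping.

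First, I would recall that the Gysin homomorphism $\Delta^{!}:A_{*}(M^{(r)})\to A_{*-nr+n}(M)$ is additive on cycle classes. Applying it to the definition
$$\mathcal{M}(Z(t)) = (-1)^{nr-d}\bigl(c^{Vir}(Z(t)) - c^{SM}(Z(t))\bigr),$$
(where $Z(t)$ has codimension $d$ inside the $nr$-dimensional ambient manifold $M^{(r)}$), yields
$$\Delta^{!}\mathcal{M}(Z(t)) = (-1)^{nr-d}\bigl(\Delta^{!}c^{Vir}(Z(t)) - \Delta^{!}c^{SM}(Z(t))\bigr).$$

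Next, I would plug in the two formulas already established. Proposition \ref{t:1} gives
$$\Delta^{!}c^{Vir}(Z(t)) = c\bigl((TM|_{Z(\Delta^{*}t)})^{\oplus r-1}\bigr)\cap c^{Vir}(Z(\Delta^{*}t)),$$
and Theorem \ref{T:2} gives the analogous identity for $c^{SM}$. Since both right-hand sides carry the same Chern class factor $c((TM|_{Z(\Delta^{*}t)})^{\oplus r-1})$, I can factor it out:
$$\Delta^{!}\mathcal{M}(Z(t)) = (-1)^{nr-d}\,c\bigl((TM|_{Z(\Delta^{*}t)})^{\oplus r-1}\bigr)\cap \bigl(c^{Vir}(Z(\Delta^{*}t)) - c^{SM}(Z(\Delta^{*}t))\bigr).$$

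Finally, I would recognize the parenthesized expression as $(-1)^{n-d}\mathcal{M}(Z(\Delta^{*}t))$, since $Z(\Delta^{*}t)$ is $(n-d)$-dimensional in $M$. Multiplying the two signs gives $(-1)^{nr-d+n-d} = (-1)^{nr+n-2d} = (-1)^{nr-n}$ (using $(-1)^{2n}=1$), yielding exactly
$$\Delta^{!}\mathcal{M}(Z(t)) = (-1)^{nr-n}\,c\bigl((TM|_{Z(\Delta^{*}t)})^{\oplus r-1}\bigr)\cap \mathcal{M}(Z(\Delta^{*}t)).$$
There is no real obstacle here; all the technical work has been absorbed into Proposition \ref{t:1} (elementary functoriality of Chern classes under Gysin) and Theorem \ref{T:2} (the Sabbah characteristic cycle formula plus Lemma \ref{L:4}). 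The only point to be careful about is keeping track of the dimensions that enter the sign conventions of the Milnor class definition.
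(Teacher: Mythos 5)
Your proposal is correct and follows exactly the paper's own argument: apply $\Delta^{!}$ to the definition of the Milnor class, substitute Proposition \ref{t:1} and Theorem \ref{T:2}, and collect the signs $(-1)^{nr-d}\cdot(-1)^{n-d}=(-1)^{nr-n}$. No differences worth noting.
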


\begin{proof}
Using Proposition  \ref{t:1} and Theorem \ref{T:2} , we have:
\begin{center}$\left.\begin{array}{rcl}
\Delta^{!}{\cal M}(Z(t))&=&\Delta^{!}\left(
(-1)^{nr-d}(c^{Vir}(Z(t))- c^{SM}(Z(t))) \right)\\
~&=&(-1)^{nr-d}c\left( \left( TM|_{Z(\Delta^{*}t)}\right)^{\oplus
r-1} \right)(c^{Vir}(Z(\Delta^{*}t))- c^{SM}(Z(\Delta^{*}t)))\\
~&=&(-1)^{nr-n}c\left( \left( TM|_{Z(\Delta^{*}t)}\right)^{\oplus
r-1} \right)\cap {\cal M}(Z(\Delta^{*}t)).\\
\end{array}\right.$
\end{center}\end{proof}

\section{Intersection product formula}

\hspace{0.5cm} As before, let $M$ be an $n$-dimensional compact complex analytic
manifold. Let $\{E_{i}\}$, be a finite collection of holomorphic vector bundles  over
$M$ of rank $d_{i}$, $2\leq i\leq r$. For each of these bundles, let  $s_{i}:M\rightarrow E_{i}$ be a regular holomorphic section
 and $X_{i}$ the $(n-d_{i})$-dimensional local
complete intersections  defined by the zeroes of
 $s_{i}$. In this section we assume that the   $X_i$ are equipped with  Whitney stratifications such that all the intersections amongst strata in the various $X_i$ are transversal.

 Let $p_i:M^{(r)}\rightarrow M$ be the $i^{th}$-projection,
then we have the holomorphic exterior product section
$$s=s_1\oplus\cdots\oplus s_r:M^{(r)}\rightarrow
p_1^*E_1\oplus\cdots \oplus p_r^*E_r,$$ given by
$s(x_1,\dots,x_r)=(s_1(x_1),\dots, s_r(x_r)).$ Then
$Z(s)=X_1\times\cdots \times X_r$ and $Z(\Delta^*(s))=X_{1}\cap
\cdots \cap X_{r}.$ Set $X=Z(\Delta^*(s)).$

The next result describes the total Milnor class of $X$ in terms
of the total Schwartz-MacPherson and Milnor classes of  the $X_i.$

\begin{theorem} \label{4.1} In the above conditions we have:
 $${\cal M}(X)=(-1)^{nr-n}c\left( \left(
TM|_{X}\right)^{\oplus r-1} \right)^{-1}\cap
\sum\;(-1)^{(n-d_{1})\epsilon_{1}+\cdots+(n-d_{r})\epsilon_{r}}
 P_{1}\;\cdot...\cdot\; P_{r}\in A_{*}(X),$$
where the sum runs over all choices  of $P_{i}\in \left\{{\cal
M}(X_i),c^{SM}(X_i)\right\},\,i=1,\dots,r,$  except
$(P_{1},\cdots,P_{r})=(c^{SM}(X_1),\cdots,c^{SM}(X_r))$ and where
$$\epsilon_{i}=\left\{\begin{array}{rcl} 1&,& \mbox{if} \;P_{i}=c^{SM}(X_i)\\
0&,& if \;P_{i}={\cal M}(X_i)\\
\end{array}\right. .$$

\end{theorem}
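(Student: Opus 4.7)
The plan is to apply the Gysin pullback $\Delta^{!}$ to the factorisation of ${\cal M}(Z(s)) = {\cal M}(X_1 \times \cdots \times X_r)$ in terms of the Schwartz-MacPherson and Milnor classes of the $X_i$, and then rewrite the left-hand side using Corollary \ref{10}. The formula Theorem \ref{4.1} is then obtained just by inverting the characteristic class factor $c((TM|_X)^{\oplus r-1})$.

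First I would expand ${\cal M}(Z(s))$ on $M^{(r)}$. Both the Fulton--Johnson/virtual class and the Schwartz--MacPherson class are multiplicative under Cartesian products: for $c^{Vir}$ this is immediate from $TM^{(r)} = \bigoplus p_i^{*}TM$ and the definition of $\tau(X_1 \times \cdots \times X_r; M^{(r)})$; for $c^{SM}$ this is the Kwieci\'nski--Ohmoto--Yokura formula used in \cite{O-Y}. Writing $c^{Vir}(X_i) = c^{SM}(X_i) + (-1)^{n-d_i}{\cal M}(X_i)$ and expanding the product $\prod_{i=1}^{r}(c^{SM}(X_i) + (-1)^{n-d_i}{\cal M}(X_i))$, the term corresponding to all indices $i$ having $P_i = c^{SM}(X_i)$ is exactly $c^{SM}(Z(s))$, so after subtracting it and multiplying by $(-1)^{nr - \sum d_i} = (-1)^{\sum(n-d_i)}$ one obtains
$${\cal M}(Z(s)) = \sum (-1)^{(n-d_1)\epsilon_1 + \cdots + (n-d_r)\epsilon_r}\, P_1 \times \cdots \times P_r,$$
the sum being taken over all choices $P_i \in \{{\cal M}(X_i), c^{SM}(X_i)\}$ except $(c^{SM}(X_1),\dots,c^{SM}(X_r))$; the sign comes from the congruence $(n-d_i)(2 - \epsilon_i) \equiv (n-d_i)\epsilon_i \pmod 2$.

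Next I would apply $\Delta^{!}$ to both sides. On the left, Corollary \ref{10} applied to the section $t = s$ (using $Z(\Delta^{*}s) = X$) gives
$$\Delta^{!}{\cal M}(Z(s)) = (-1)^{nr-n} c\bigl((TM|_X)^{\oplus r-1}\bigr) \cap {\cal M}(X).$$
On the right, because each $X_i$ carries a Whitney stratification and all their strata meet transversally, $\Delta(M)$ meets $Z(s) = X_1 \times \cdots \times X_r$ transversally stratum-by-stratum, so the excess intersection formula is trivial and $\Delta^{!}(P_1 \times \cdots \times P_r) = P_1 \cdot \ldots \cdot P_r$ as the honest intersection product in $A_{*}(X)$ (\cite[Chapters 6, 8]{Ful}). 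Equating the two expressions and capping with $(-1)^{nr-n} c((TM|_X)^{\oplus r-1})^{-1}$ yields the formula, since $(-1)^{nr-n}$ is its own inverse.

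The only subtle point, and the one that really uses the transversality hypothesis, is justifying the identity $\Delta^{!}(P_1 \times \cdots \times P_r) = P_1 \cdots P_r$ when the $P_i$ are singular cycle classes supported on the possibly very singular varieties $X_i$: one must check that each external product on the right is represented by a cycle whose pullback under the regular embedding $\Delta$ is computed, via Corollary \ref{10} or directly by \cite[Prop.~14.1]{Ful}, as the intersection product on $X$. Once the stratified transversality assumption is converted into the required transversality of cycles (so that no excess contribution appears), the rest of the argument is purely formal bookkeeping of signs.
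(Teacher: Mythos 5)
Your argument is correct and follows essentially the same route as the paper: apply Corollary \ref{10} to $Z(s)=X_1\times\cdots\times X_r$, expand ${\cal M}(Z(s))$ as a signed sum of exterior products of Schwartz--MacPherson and Milnor classes, and use that $\Delta^{!}(P_1\times\cdots\times P_r)=P_1\cdots P_r$ (which in Fulton's formalism is the \emph{definition} of the intersection product, so no separate excess-intersection check is needed there --- the stratified transversality hypothesis is consumed inside Corollary \ref{10}, via Lemma \ref{L:4} and Theorem \ref{T:2}). The only difference is that you re-derive the product formula for Milnor classes from the multiplicativity of $c^{Vir}$ and $c^{SM}$ (Kwieci\'nski's theorem), whereas the paper simply cites it as \cite[Theorem 3.3]{O-Y}; your sign bookkeeping agrees with theirs.
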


Thus for instance, when
$r=2$ we have:
$${\cal M}(X)=c\left( \left( TM|_{X}\right) \right)^{-1}\cap
\qquad \qquad\qquad\qquad   \qquad \qquad\qquad\qquad  \qquad
\qquad\qquad\qquad  $$
$$\quad \Big((-1)^{n}{\cal
M}(X_1)\cdot {\cal M}(X_2)+ (-1)^{d_{1}} c^{SM}(X_1)\cdot  {\cal
M}(X_2)+(-1)^{d_{2}}{\cal M}(X_1)\cdot  c^{SM}(X_2)\Big).$$ For
$r= 3$ we get: {\small
$${\cal M}(X)= c\left( \left( TM|_{X}\right)^{\oplus 2}
\right)^{-1}\cap \;\; \Big({\cal M}(X_1)\cdot {\cal M}(X_2)\cdot
{\cal M}(X_3)+ (-1)^{(d_{1}+d_{2})} c^{SM}(X_1)\cdot
c^{SM}(X_2)\cdot  {\cal M}(X_3) +$$
$$+(-1)^{(d_{1}+d_{3})} c^{SM}(X_1)\cdot {\cal
M}(X_2)\cdot  c^{SM}(X_3)+(-1)^{(d_{2}+d_{3})} {\cal M}(X_1)\cdot
c^{SM}(X_2)\cdot  c^{SM}(X_3)+$$
$$+(-1)^{(n-d_{1})} c^{SM}(X_1)\cdot {\cal M}(X_2)\cdot {\cal
M}(X_3)+(-1)^{(n-d_{2})} {\cal M}(X_1)\cdot c^{SM}(X_2)\cdot {\cal
M}(X_3)+$$
$$+(-1)^{(n-d_{3})} {\cal M}(X_1)\cdot {\cal M}(X_2)\cdot  c^{SM}(X_3)
\Big),$$} and so on.

\vskip.2cm

\begin{proaf}{of Theorem \ref{4.1}.}
By Corollary \ref{10},
$$\Delta^{!}{\cal M}(Z(s))=(-1)^{nr-n}c\left( \left( TM|_{Z(\Delta^{*}s)}\right)^{\oplus
r-1} \right)\cap{\cal M}(Z(\Delta^{*}s)).$$ Thus,
$${\cal M}(X)=(-1)^{nr-n}c\left( \left(
TM|_{Z(\Delta^{*}s)}\right)^{\oplus r-1}
\right)^{-1}\cap\Delta^{!}{\cal M}(X_{1}\times \cdots \times
X_{r}),$$
 and using the description of the Milnor classes of a product due to   \cite[Theorem 3.3]{O-Y}, we have:
$${\cal M}(X)=(-1)^{nr-n}c\left( \left(
TM|_{X}\right)^{\oplus r-1} \right)^{-1}\cap
\;\sum(-1)^{(n-d_{1})\epsilon_{1}+\cdots+(n-d_{r})\epsilon_{r}}\Delta^{!}\left(
 P_{1}\times ...\times  P_{r}\right),$$
where the sum runs over all choices  of $P_{i}\in \left\{{\cal
M}(X_i),c^{SM}(X_i)\right\},\,i=1,\dots,r,$  except
$(P_{1},\cdots,P_{r})=(c^{SM}(X_1),\cdots,c^{SM}(X_r))$ and
where $$\epsilon_{i}=\left\{\begin{array}{rcl} 1&,& if \; P_{i}=c^{SM}(X_i)\\
0&,& if \; P_{i}={\cal M}(X_i)\\
\end{array}\right. .$$
Therefore the result follows because  $$\Delta^{!}\left(
P_{1}\times ...\times P_{r}\right)= P_{1}\;\cdot...\cdot\;
P_{r}\in A_{*}(X).$$
\end{proaf}

Using now that the total Milnor class is, up to sign, the difference between the total Schwartz-McPherson and the total virtual class we obtain:

\begin{corollary}\label{11} $${\cal M}(X)=(-1)^{r-1}c\left( \left(
TM|_{X}\right)^{\oplus r-1} \right)^{-1}\cap \displaystyle\sum_{i=1}^{r} a_{1,i}\cdot...\cdot a_{r-1,i} \cdot {\cal M}(X_{i}),$$
where  $a_{j,i}=\left\{\begin{array}{ccl}
c^{Vir}(X_{j+1}) &\;{\rm if}\;& j\leq i\\
c^{SM}(X_{j}) &\;{\rm if}\;& j> i\\
\end{array}\right.$.
\end{corollary}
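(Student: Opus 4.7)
My plan is to deduce Corollary~\ref{11} directly from Theorem~\ref{4.1} by a purely algebraic rearrangement of the indexed sum, using only the defining relation of the Milnor class in the form $v_i - s_i = (-1)^{n-d_i}\,m_i$, where I abbreviate $v_i := c^{Vir}(X_i)$, $s_i := c^{SM}(X_i)$, $m_i := {\cal M}(X_i)$ and $D := d_1+\cdots+d_r$.

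The first step is to identify the sum $\sum'$ over $(P_1,\ldots,P_r)\in\{m_i,s_i\}^r\setminus\{(s_1,\ldots,s_r)\}$ appearing in Theorem~\ref{4.1} as a difference of two products. Extending the sum to include the excluded all-SM choice, the extended sum factors as $\prod_{i=1}^{r}(m_i + (-1)^{n-d_i} s_i)$. Using the identity above, each factor collapses to $(-1)^{n-d_i}\,v_i$, so the extended sum equals $(-1)^{nr-D}\prod_i v_i$. Subtracting off the excluded all-SM term $(-1)^{nr-D}\prod_i s_i$ then gives
$$\sum'\ =\ (-1)^{nr-D}\Big(\prod_{i=1}^{r} v_i\ -\ \prod_{i=1}^{r} s_i\Big).$$

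Next, I apply the elementary telescoping identity
$$\prod_{i=1}^{r} v_i\ -\ \prod_{i=1}^{r} s_i\ =\ \sum_{i=1}^{r} v_1\cdots v_{i-1}\,(v_i - s_i)\,s_{i+1}\cdots s_r,$$
and substitute $v_i - s_i = (-1)^{n-d_i}\,m_i$ into each summand. This produces a sum indexed by the position $i$ of the single Milnor-class factor, flanked on the left by virtual classes and on the right by Schwartz--MacPherson classes --- precisely the pattern of $a_{1,i}\cdots a_{r-1,i}\cdot{\cal M}(X_i)$ recorded in the statement. Plugging the resulting expression back into Theorem~\ref{4.1} and consolidating the three sign contributions $(-1)^{nr-n}$, $(-1)^{nr-D}$ and $(-1)^{n-d_i}$ collapses the total sign in front of each summand to $(-1)^{r-1}$.

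The only real obstacle is the bookkeeping: matching up the two-index labels $a_{j,i}$ against the telescoping terms $v_1\cdots v_{i-1}\,m_i\,s_{i+1}\cdots s_r$, and verifying that the three accumulated signs combine, independently of $i$, to the single factor $(-1)^{r-1}$ displayed in the corollary. Once the telescoping is arranged, the argument is purely mechanical and requires no further geometric input beyond Theorem~\ref{4.1}.
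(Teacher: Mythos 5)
Your strategy --- extend the sum in Theorem \ref{4.1} to all $2^r$ choices so that it factors as $\prod_{i}\bigl(m_i+(-1)^{n-d_i}s_i\bigr)=(-1)^{nr-D}\prod_i v_i$, subtract the excluded all-SM term to get $\sum'=(-1)^{nr-D}\bigl(\prod_i v_i-\prod_i s_i\bigr)$, and then telescope --- is exactly the manipulation the paper intends (it offers no proof beyond the remark that the Milnor class is, up to sign, $c^{Vir}-c^{SM}$), and those two steps are correct. The genuine gap is in the sign bookkeeping that you defer and simply assert. Carrying it out, the summand containing ${\cal M}(X_i)$ acquires the sign
$$(-1)^{nr-n}\cdot(-1)^{nr-D}\cdot(-1)^{n-d_i}=(-1)^{D+d_i},\qquad D=d_1+\cdots+d_r,$$
which is not $(-1)^{r-1}$ and is not even independent of $i$ unless all the $d_i$ have the same parity. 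It collapses to $(-1)^{r-1}$ exactly when every $E_i$ is a line bundle ($d_i=1$, $D=r$), which is the only setting in which the paper later uses the corollary. You can test this against the displayed $r=2$ expansion after Theorem \ref{4.1}: collapsing it via $m_1+(-1)^{n-d_1}s_1=(-1)^{n-d_1}v_1$ gives $(-1)^{d_1}\,c^{Vir}(X_1)\cdot{\cal M}(X_2)+(-1)^{d_2}\,{\cal M}(X_1)\cdot c^{SM}(X_2)$, with $i$-dependent signs. So your proof as written does not establish the stated sign; you must either restrict to line bundles or record the sign as $(-1)^{D+d_i}$ inside the sum.

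A secondary point: your telescoping yields the flanking pattern $c^{Vir}(X_1)\cdots c^{Vir}(X_{i-1})\cdot{\cal M}(X_i)\cdot c^{SM}(X_{i+1})\cdots c^{SM}(X_r)$, whereas the $a_{j,i}$ as printed give $c^{Vir}(X_2)\cdots c^{Vir}(X_{i+1})\cdot c^{SM}(X_{i+1})\cdots c^{SM}(X_{r-1})$, which repeats the index $i+1$ and never involves $X_1$ or $X_r$ as a flanking factor. The printed indexing is evidently a misprint and your pattern is surely the intended one, but your claim that it is ``precisely the pattern recorded in the statement'' is not literally true; a careful proof should note the discrepancy and state which indexing it actually proves.
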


\begin{corollary} $${\cal M}(X)=(-1)^{n-r} c\left( \left(
TM|_{X}\right)^{\oplus r-1} \right)^{-1}\cap \;\left( c^{Vir}(X_{1})\cdots c^{Vir}(X_{r})- c^{SM}(X_{1})\cdots c^{SM}(X_{r}) \right).$$ \end{corollary}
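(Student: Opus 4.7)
The plan is to deduce this corollary directly from Theorem~\ref{4.1} by expanding the classes $\prod_{i=1}^r c^{Vir}(X_i)$ and $\prod_{i=1}^r c^{SM}(X_i)$ into a sum over subsets of $\{1,\ldots,r\}$, via the defining relation for the Milnor class. Unwinding the definition gives
$$c^{Vir}(X_i) \;=\; c^{SM}(X_i) + (-1)^{n-d_i}\,{\cal M}(X_i),$$
and therefore, by the distributive law,
$$\prod_{i=1}^r c^{Vir}(X_i) \;=\; \sum_{T\subseteq\{1,\ldots,r\}}(-1)^{\sum_{i\in T}(n-d_i)}\prod_{i\in T}{\cal M}(X_i)\prod_{i\notin T}c^{SM}(X_i).$$
The subset $T=\emptyset$ contributes precisely $\prod_i c^{SM}(X_i)$, so
$$\prod_{i=1}^r c^{Vir}(X_i) - \prod_{i=1}^r c^{SM}(X_i) \;=\; \sum_{T\neq\emptyset}(-1)^{\sum_{i\in T}(n-d_i)}\prod_{i\in T}{\cal M}(X_i)\prod_{i\notin T}c^{SM}(X_i).$$

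Next I would match this expansion term by term with the sum appearing in Theorem~\ref{4.1}. Each nonempty subset $T\subseteq\{1,\ldots,r\}$ corresponds bijectively to an allowed tuple $(P_1,\ldots,P_r)$ with $P_i={\cal M}(X_i)$ for $i\in T$ and $P_i=c^{SM}(X_i)$ for $i\notin T$; the excluded tuple $(c^{SM}(X_1),\ldots,c^{SM}(X_r))$ is precisely the one corresponding to $T=\emptyset$. The sign attached by Theorem~\ref{4.1} is $(-1)^{\sum_{i\notin T}(n-d_i)}$, since $\epsilon_i=1$ exactly when $i\notin T$, whereas the expansion above carries the sign $(-1)^{\sum_{i\in T}(n-d_i)}$. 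Because $\sum_{i\in T}(n-d_i)+\sum_{i\notin T}(n-d_i)=\sum_{i=1}^r(n-d_i)$ is independent of $T$, these two sign families differ only by the single global factor $(-1)^{\sum_i(n-d_i)}$, and hence the sum in Theorem~\ref{4.1} equals $(-1)^{\sum_i(n-d_i)}\bigl(\prod_i c^{Vir}(X_i)-\prod_i c^{SM}(X_i)\bigr)$.

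Substituting back into Theorem~\ref{4.1} produces
$${\cal M}(X)=(-1)^{nr-n+\sum_i(n-d_i)}\,c\bigl((TM|_X)^{\oplus r-1}\bigr)^{-1}\cap\Bigl(\prod_i c^{Vir}(X_i)-\prod_i c^{SM}(X_i)\Bigr),$$
and a final parity count collapses the exponent modulo $2$ to the stated $(-1)^{n-r}$. The only genuine obstacle is the sign bookkeeping: the crux is the identity $(-1)^{\sum_{i\notin T}(n-d_i)}=(-1)^{\sum_i(n-d_i)}\cdot(-1)^{\sum_{i\in T}(n-d_i)}$, which converts the complement-indexed signs of Theorem~\ref{4.1} into the subset-indexed signs that arise naturally when the binomial-type expansion of $\prod_i c^{Vir}(X_i)$ is carried out via the Milnor class relation; once this is in place, the telescoping is automatic.
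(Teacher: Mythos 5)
Your strategy is the intended one: the paper gives no written proof of this corollary, deducing it from Theorem \ref{4.1} only via the remark that the total Milnor class is, up to sign, the difference of the virtual and Schwartz--MacPherson classes. Your expansion of $\prod_i c^{Vir}(X_i)$ over subsets $T$ using $c^{Vir}(X_i)=c^{SM}(X_i)+(-1)^{n-d_i}\mathcal{M}(X_i)$, the bijection between nonempty $T$ and the admissible tuples $(P_1,\dots,P_r)$, and the conversion of the complement-indexed signs $(-1)^{\sum_{i\notin T}(n-d_i)}$ of Theorem \ref{4.1} into the subset-indexed signs $(-1)^{\sum_{i\in T}(n-d_i)}$ by factoring out the global $(-1)^{\sum_i(n-d_i)}$ are all correct, and they constitute the whole content of the argument.

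The one step that does not check out is the ``final parity count.'' You arrive at the exponent $nr-n+\sum_{i=1}^r(n-d_i)=2nr-n-\sum_i d_i$, which is congruent modulo $2$ to $n-\sum_{i=1}^r d_i$, i.e.\ to $\dim X$ --- not to $n-r$. (A direct cross-check confirms this: writing $\mathcal{M}(X)=(-1)^{\dim X}\left(c^{Vir}(X)-c^{SM}(X)\right)$ and applying Proposition \ref{t:1} and Theorem \ref{T:2} together with the multiplicativity of both classes on $X_1\times\cdots\times X_r$ gives the sign $(-1)^{n-\sum_i d_i}$ immediately.) The two exponents agree precisely when $\sum_i d_i\equiv r\pmod 2$, in particular when every $E_i$ is a line bundle ($d_i=1$), which is the only setting in which the corollary is used later in the paper. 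So you must either add that parity hypothesis (or restrict to the hypersurface case), or replace the sign in the statement by $(-1)^{n-\sum_i d_i}$; asserting that the exponent collapses to $n-r$ in the general rank-$d_i$ setting of Section 4 is not justified, and it is the single point where your proof --- and, as written, the printed statement itself --- needs repair.
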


\section{Applications to line bundles}

In this section we replace the bundles $E_i$ by  line
bundles $L_i$. We assume that each hypersurface
  $X_i$ is equipped with a Whitney stratification  ${\cal S}_{i}$ such that all the intersections amongst strata in the various $X_i$ are transversal.
We set $X := X_1 \cap \cdots \cap X_r$ and we obtain in this section  three applications of the  formulas in the previous section.


\subsection{Parusi\'nski-Pragacz-type formula}
\label{section-P-P-formula}

 Now we  extend to  local complete intersections as above
the following  important characterization of Milnor classes
obtained by A. Parusi\'{n}ski and P. Pragacz in \cite{Par-Pr} for hypersurfaces in compact manifolds:

\begin{equation}\label{P}{\cal M}(X_i):=\sum_{S \in {\cal S}_{i}}\;\gamma_{S}\left(c(L_{i|_{X_i}})^{-1} \cap
c^{SM}(\overline{S})\right)\in A_*(X_i),\end{equation} where
$\gamma_{S}$ is the function defined on each stratum $S$
as follows: For each $x \in S \subset X_i$, let $F_x$ be a
{\it local Milnor fibre} (recall $X_i$ is a hypersurface in the
complex manifold $M$), and  let  $\chi(F_{x})$ be its Euler
characteristic. We set:
$$\mu(x;X_i):= (-1)^{n}\;(\chi(F_{x})-1) \,,$$ and
call it the {\it local Milnor number} of $X_i$ at $x$. This number
is constant on each Whitney stratum, by the topological triviality
of Whitney stratifications, so we denote it $\mu_{S}$. Then
$\gamma_{S}$ is defined inductively by:
$$\gamma_{S}=\mu_{S} - \sum_{S' \neq S,\;\overline{S'} \supset S}
\gamma_{S'}.$$

Following the idea of Parusi\'nski and Pragacz of describing the
Milnor classes using only the Schwartz-MacPherson classes, we have
as a consequence of Corollary \ref{11} and equation (\ref{P}) the
following:

\begin{corollary}\label{ais} The total Milnor class of $X$ is:
$${\cal M}(X)=(-1)^{r-1}c\left( \left(
TM|_{X}\right)^{\oplus r-1} \right)^{-1}\cap \displaystyle\sum_{i=1}^{r}\sum_{S \in {\cal S}_{i}}\;\gamma_{S}\; a_{1,i}\cdot...\cdot a_{r-1,i} \cdot \left(c(L_{i|_{X_i}})^{-1} \cap
c^{SM}(\overline{S})\right),$$
where  $a_{j,i}=\left\{\begin{array}{ccl}
c^{Vir}(X_{j+1}) &\;{\rm if}\;& j\leq i\\
c^{SM}(X_{j}) &\;{\rm if}\;& j> i\\
\end{array}\right.$.\end{corollary}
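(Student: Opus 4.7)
The plan is to derive the formula by direct substitution of the Parusi\'nski--Pragacz formula (\ref{P}) into the conclusion of Corollary \ref{11}. Since both of these results are already available, the argument reduces to a bookkeeping exercise once the setting is matched.

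First, I would recall the hypotheses. Each $X_i$ is a hypersurface in $M$ cut out by a regular section of the line bundle $L_i$, and is equipped with the Whitney stratification ${\cal S}_i$. Hence equation (\ref{P}) applies and yields
\[ {\cal M}(X_i) = \sum_{S \in {\cal S}_i} \gamma_S \left( c(L_i|_{X_i})^{-1} \cap c^{SM}(\overline S) \right) \in A_*(X_i), \]
with coefficients $\gamma_S$ defined inductively from the local Milnor numbers $\mu_S$ as in Section~\ref{section-P-P-formula}.

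Next, I would invoke Corollary \ref{11}, which gives
\[ {\cal M}(X) = (-1)^{r-1}\, c\big((TM|_X)^{\oplus r-1}\big)^{-1} \cap \sum_{i=1}^{r} a_{1,i} \cdots a_{r-1,i} \cdot {\cal M}(X_i), \]
with $a_{j,i}$ defined as in that statement. Substituting the Parusi\'nski--Pragacz expression for each ${\cal M}(X_i)$ into this identity and pulling the finite sum over $S \in {\cal S}_i$ out through the cap product and the intersection product immediately produces the asserted formula.

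The only point that requires a word of comment is that the resulting products
\[ a_{1,i} \cdots a_{r-1,i} \cdot \big( c(L_i|_{X_i})^{-1} \cap c^{SM}(\overline S) \big) \]
should be well-defined classes in $A_*(X)$. This is precisely guaranteed by the transversality hypothesis on the stratifications ${\cal S}_i$, which is the same hypothesis under which the products $a_{1,i} \cdots a_{r-1,i} \cdot {\cal M}(X_i)$ appearing in Corollary \ref{11} were already meaningful. In this sense there is no real obstacle: the substitution is legitimate because the ambient transversality ensures compatibility of all intersection products on the nose.
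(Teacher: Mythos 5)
Your proposal is correct and follows exactly the route the paper intends: Corollary \ref{ais} is obtained by substituting the Parusi\'nski--Pragacz expression (\ref{P}) for each ${\cal M}(X_i)$ into Corollary \ref{11} and interchanging the finite sum over strata with the cap and intersection products. Your added remark on the well-definedness of the products under the transversality hypothesis is a reasonable clarification of a point the paper leaves implicit.
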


\vspace{0,2cm} Using \cite[Example 3.2.8 and Proposition 6.3]{Ful}
we have:

\begin{corollary}[Parusi\'{n}ski-Pragacz formula for local
complete intersections]\label{P-P} In the above conditions we have:
 $${\cal M}(X)=(-1)^{nr-n}c\left( \left(
TM|_{X}\right)^{\oplus r-1} \right)^{-1}\cap   \qquad \qquad \qquad
\qquad \qquad \qquad  \qquad \qquad \qquad $$
$$  \qquad \qquad \qquad \qquad \cap \sum{\alpha}_{S_{1},\dots,S_{r}}^{\epsilon_{1}, \dots, \epsilon_{r}}
 \frac{c(L_1)^{\epsilon_{1}}\cdots c(L_r)^{\epsilon_{r}}}{c(L_1\oplus\cdots\oplus L_r)}\cap c^{SM}(\overline{S_1})\cdots c^{SM}(\overline{S_r})\,,$$
 \noindent where the sum runs over all choices  of $S_{i}\in {\cal S}_i,\;i=1,\dots,r$ except $(S_{1},\dots,S_{r})= ((X_1)_{reg},\dots,(X_r)_{reg})$ and
where $(X_{i})_{reg}$ denotes the regular part of $X_{i}$,
 $${\alpha}_{S_{1},\dots,S_{r}}^{\epsilon_{1}, \dots,
\epsilon_{r}}=(-1)^{(n-{1})(\epsilon_{1}+\cdots+\epsilon_{r})}
 \gamma_{S_1}^{1-\epsilon_{1}}\cdots
 \gamma_{S_r}^{1-\epsilon_{r}}\,,$$ $\,$ and

 $$\epsilon_{i}=\left\{\begin{array}{rcl} 1&,& if \;S_{i}=(X_i)_{reg}\\
0&,& if \;\dim(S_i)<n-1\\
\end{array}\right. .$$
\end{corollary}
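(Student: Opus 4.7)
The plan is to derive Corollary \ref{P-P} by substituting the Parusi\'nski--Pragacz expression (\ref{P}) for each factor ${\cal M}(X_i)$ in the formula given by Theorem \ref{4.1}, and then reorganizing the resulting sum. Specializing Theorem \ref{4.1} to the line bundle case $d_i = 1$ gives
\[
{\cal M}(X) = (-1)^{nr-n}\, c\big(\,(TM|_{X})^{\oplus r-1}\,\big)^{-1}\cap \sum (-1)^{(n-1)(\epsilon_{1}+\cdots+\epsilon_{r})}\, P_{1}\cdots P_{r},
\]
where the sum ranges over all $(P_1,\dots,P_r) \in \prod_i \{{\cal M}(X_i), c^{SM}(X_i)\}$ except $(c^{SM}(X_1),\dots,c^{SM}(X_r))$, and $\epsilon_i = 1$ iff $P_i = c^{SM}(X_i)$.

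Next, I would replace each factor ${\cal M}(X_i)$ (i.e., those indices with $\epsilon_i = 0$) by the sum (\ref{P}) over strata $S_i \in {\cal S}_i$ with weight $\gamma_{S_i}$ and multiplier $c(L_i|_{X_i})^{-1}$. For indices with $\epsilon_i = 1$ I would adopt the convention $S_i = (X_i)_{{\rm reg}}$, so that $c^{SM}(X_i) = c^{SM}(\overline{S_i})$. The product $P_1\cdots P_r$ then becomes
\[
\sum_{(S_1,\dots,S_r)} \Big(\prod_{i:\,\epsilon_i = 0} \gamma_{S_i}\, c(L_i|_{X_i})^{-1}\Big) \prod_{i=1}^r c^{SM}(\overline{S_i}).
\]

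The Whitney multiplicativity formula for line bundles, \cite[Example 3.2.8]{Ful}, gives $c(L_1\oplus\cdots\oplus L_r) = \prod_i c(L_i)$, so the $L_i$-factor can be rewritten as
\[
\prod_{i:\,\epsilon_i = 0} c(L_i)^{-1} = \frac{c(L_1)^{\epsilon_1}\cdots c(L_r)^{\epsilon_r}}{c(L_1\oplus\cdots\oplus L_r)},
\]
where Proposition 6.3 of \cite{Ful} justifies pulling back the bundle identity along the inclusion $X\hookrightarrow M$. Combining this with the sign $(-1)^{(n-1)(\epsilon_1+\cdots+\epsilon_r)}$ and the $\gamma_{S_i}$ weights $\prod_{i:\,\epsilon_i=0}\gamma_{S_i} = \prod_i \gamma_{S_i}^{1-\epsilon_i}$ gives precisely the coefficient $\alpha^{\epsilon_1,\dots,\epsilon_r}_{S_1,\dots,S_r}$ appearing in the statement.

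Finally, to reconcile the ranges of summation I would note that a smooth point of a hypersurface has contractible local Milnor fibre, so $\mu_{(X_i)_{{\rm reg}}} = 0$ and hence $\gamma_{(X_i)_{{\rm reg}}} = 0$ by the inductive definition; consequently, for any $i$ with $\epsilon_i = 0$ the stratum $S_i$ may be restricted to $\dim S_i < n-1$ at no cost. The only remaining excluded tuple is $(S_1,\dots,S_r) = ((X_1)_{{\rm reg}},\dots,(X_r)_{{\rm reg}})$, which is exactly the case excluded in Theorem \ref{4.1}. I expect the main difficulty to be purely combinatorial bookkeeping: carefully aligning the two indexing conventions and verifying that the signs and $\gamma$ factors assemble into $\alpha$ as claimed; no new geometric input is needed beyond the substitution and the Whitney sum formula.
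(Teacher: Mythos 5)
Your proposal is correct and follows essentially the route the paper intends: specialize Theorem \ref{4.1} to $d_i=1$, substitute the Parusi\'nski--Pragacz expansion (\ref{P}) for each factor ${\cal M}(X_i)$, absorb the $c(L_i)^{-1}$ factors via the Whitney sum formula \cite[Example 3.2.8]{Ful} together with \cite[Proposition 6.3]{Ful}, and use $\gamma_{(X_i)_{reg}}=0$ to match the index sets. The paper omits these bookkeeping details entirely, so your write-up supplies exactly the missing verification with no divergence in method.
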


\subsection{Milnor classes and L\^e cycles}

  Let
${\rm Sing}(X_i)$ be the singular set of $X_i$. This is the set of
points where the section $s_i$ fails to be transversal to the
zero-section of $L_i$. Consider the blow-up $Bl_{\rm Sing(X_i)}M:=B_i$
of $M$ along  ${\rm Sing}(X_i)$, let  $D_i$ be the exceptional divisor
of $B_i$  and ${\cal L}_i$ the associated line bundle on $B_i$, that we
call the {\it tautological line bundle}  of $B_i$. One has a
diagram:
$$
\xymatrix {  &  {\cal L}_i \ar[d]\\
 D_i
\ar[r] \ar[d] & B_i  \ar[d]^{b_i} \\ {\rm Sing}(X_i) \ar[r] & M }
$$

\begin{definition} For $0\leq k \leq n$, we define the $k$-th L\^e cycle of the section
$s_i$ as the following class in the Chow group of $M$:
$$ \Lambda_{k}(X_i)=(b_i)_{*}\left(c_{1}({\cal L}_i)^{n-k} \cap [D_i]\right).$$\end{definition}

\vspace{0.2cm} Notice that $\Lambda_{k}(X_i)$ is supported in
${\rm Sing}(X_i)$, hence we can also think of  $\Lambda_{k}(X_i)$ as being a
class in the Chow group of ${\rm Sing}(X_i)$. Notice also that
$\Lambda_{k}(X_i)$ is zero for $k > {\rm dim}\;{\rm Sing}(X_i)$.

We point out that the cycles obtained in this way are special
cases of {\it Segre cycles} (see for instance \cite{Ful}).
Moreover, because $M$ is compact, these represent classes in the
homology ring $H_{*}(M;\mathbb{Z})$.

In \cite[Theorem 1]{BMS} the authors proved the following
description for the Milnor classes of $X_i$ in terms of the L\^e
cycles of $X_i$.

$${\cal M}_{k}(X_i)=\sum_{l=0}^{d-k}(-1)^{k+l}
\left( \begin{array}{c}l+k\\
k\\
\end{array}\right)
c_{1}(L_i|_{X_i})^{l} \cap \Lambda_{l+k}(X_i) ,$$

Based on this formula and that of Corollary \ref{11}, we get  a
description of Milnor classes of local complete intersections
$X=X_{1} \cap \cdots \cap X_{r}$ via the L\^e cycles of each
hypersurface $ X_{i}$:

\begin{corollary}\label{leformula} For each $0 \le k \le {\rm dim } \,X$, the $k^{th}$ Milnor class of $X$ is:
$${\cal M}_k(X)=(-1)^{r-1}\;c\left( \left(
TM|_{X}\right)^{\oplus r-1} \right)^{-1}\cap \displaystyle\sum_{i=1}^{r} \displaystyle\sum_{l=0}^{d_{i}-k} (-1)^{k+l} \left( \begin{array}{c}l+k\\
k\\
\end{array} \right ) \, \cdot \qquad   \qquad   \qquad   \qquad   \qquad   \qquad  $$
$$  \qquad   \qquad   \qquad   \qquad   \qquad   \qquad   \qquad   \qquad   \qquad  \cdot \, (a_{1,i})\cdot...\cdot (a_{r-1,i}) \cdot\left( c_{1}(L_{i}{|_{X_{i}}})^{l} \cap \Lambda_{l+k}(X_{i})\right) ,$$
where  $(a_{j,i})=\left\{\begin{array}{ccl}
c^{Vir}(X_{j+1}) &\;{\rm if}\;& j\leq i\\
c^{SM}(X_{j}) &\;{\rm if}\;& j> i\\
\end{array}\right.$.\end{corollary}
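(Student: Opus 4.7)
The strategy is a direct substitution combining Corollary~\ref{11} with the L\^e cycle description of the Milnor class of a single hypersurface proved in \cite[Theorem 1]{BMS}. Both ingredients are already in hand, so the proof reduces to inserting one into the other and tracking graded pieces.

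First I would recall Corollary~\ref{11}, which expresses the total Milnor class of $X = X_1 \cap \cdots \cap X_r$ as
$${\cal M}(X) = (-1)^{r-1} c\bigl((TM|_X)^{\oplus r-1}\bigr)^{-1} \cap \sum_{i=1}^{r} a_{1,i} \cdots a_{r-1,i} \cdot {\cal M}(X_i) \, ,$$
so it only remains to expand each $ {\cal M}(X_i) $ in terms of L\^e cycles of $X_i$. For each hypersurface $X_i$ I would apply the BMS formula
$${\cal M}_k(X_i) = \sum_{l=0}^{d_i - k} (-1)^{k+l} \binom{l+k}{k} c_1(L_i|_{X_i})^l \cap \Lambda_{l+k}(X_i) \, ,$$
and substitute the identity $\sum_k {\cal M}_k(X_i) = {\cal M}(X_i)$ into the expression above. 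Because the cap product and push-forward respect the grading on $A_*(X)$, extracting the degree-$k$ homogeneous component of both sides produces the stated formula for ${\cal M}_k(X)$.

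The only delicate point is the grading bookkeeping, since the inhomogeneous factors $a_{j,i}$ and $c((TM|_X)^{\oplus r-1})^{-1}$ are total (mixed-degree) characteristic classes; the right-hand side of Corollary~\ref{leformula} should be read as a class in $A_*(X)$ from which the degree-$k$ component is to be taken. With this interpretation, no further geometric or topological input is required and the corollary follows by direct substitution of the BMS formula into Corollary~\ref{11}.
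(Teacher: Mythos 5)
Your proposal matches the paper's own derivation: the paper obtains Corollary \ref{leformula} precisely by substituting the L\^e-cycle expansion of ${\cal M}(X_i)$ from \cite[Theorem 1]{BMS} into Corollary \ref{11}, with no further argument given. Your remark about reading off the degree-$k$ homogeneous component (since the factors $a_{j,i}$ and $c((TM|_X)^{\oplus r-1})^{-1}$ are mixed-degree) is the correct interpretation of the statement and is, if anything, more careful than the paper's one-line justification.
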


\subsection{Aluffi type formula}
Let  $X=X_{1} \cap \cdots \cap X_{r}$ as above.
Now we express the total Milnor class of  $X$ in terms of  the Aluffi's
$\mu$-class of each hypersurface $X_i.$ The $\mu$-class was
introduced  in \cite{Aluffi} and it involves the Segre
class of the singular locus of the hypersurface in the total
smooth ambient variety.

For each hypersurface $X_i$ of $M$,  the Aluffi's
 $\mu$-class ${\mu}_{L_i}({\rm Sing}(X_i))$ of the
singular locus of $X_i$  is defined  by the formula
 $$\mu_{L_i}({\rm Sing}(X_i))=c(T^*M\otimes {L_i})\cap s({\rm Sing}(X_i),M),$$ where
$s({\rm Sing}(X_i),M)$ is the Segre class of ${\rm Sing}(X_i)$ in $M$ (see
 \cite[Chapter 4]{Ful}).

We need to introduce some notation.
If $\alpha \in A_*(X_i)$ is a cycle in the Chow group of
$X_i$  and $\alpha=\sum_{j\geq
0}{\alpha}^j,$ where ${\alpha}^j$ is the  codimension $j$
component of $\alpha,$ then Aluffi introduced the following cycles
$$\alpha^{\vee}:=\sum_{j\geq 0}(-1)^j{\alpha}^j \,,$$ and
$$\alpha \otimes L_i:=\sum_{j\geq 0}\frac{{\alpha}^j}{c(L_i)^j}.$$
Then Aluffi proved in \cite{Aluffi} that the total Milnor class
${\cal M}(X_i)$ can be described as follows:

\begin{equation}\label{Aluffi-formula}{\cal M}(X_i)=(-1)^{n-1}c(L_i)^{n-1}\cap (\mu_{L_i}({\rm Sing}(X_i))^{\vee
}\otimes L_i).\end{equation}

\vspace{0,2cm} Again using Corollary \ref{11},  the above
equation yields:
\begin{corollary} \label{Aluffi} The Total Milnor class of  $X := X_1 \cap \cdots \cap X_r$ is:
$${\cal M}(X)=(-1)^{n-r}c\left( \left(
TM|_{X}\right)^{\oplus r-1} \right)^{-1}\cap  \quad \quad\quad\quad  \quad \quad\quad\quad \quad \quad\quad\quad \quad \quad\quad\quad \quad \quad\quad\quad $$  $$  \quad\quad\quad\quad \quad\quad\quad\quad \quad\quad\quad \cap  \left(\displaystyle\sum_{i=1}^{r} a_{1,i}\cdot...\cdot a_{r-1,i}\cdot c(L_{i})^{n-1}\cap (\mu_{L_{i}}({\rm Sing}{(X_{i})\;})^{\vee
}\otimes L_{i})\right),$$
where  $a_{j,i}=\left\{\begin{array}{ccl}
c^{Vir}(X_{j+1}) &\;{\rm if}\;& j\leq i\\
c^{SM}(X_{j}) &\;{\rm if}\;& j> i\\
\end{array}\right.$.
\end{corollary}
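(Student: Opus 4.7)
The plan is to derive this as a direct consequence of Corollary \ref{11} combined with Aluffi's formula (\ref{Aluffi-formula}). The starting point is the identity
$${\cal M}(X)=(-1)^{r-1}c\left( \left(TM|_{X}\right)^{\oplus r-1} \right)^{-1}\cap \sum_{i=1}^{r} a_{1,i}\cdots a_{r-1,i} \cdot {\cal M}(X_{i}),$$
which expresses the total Milnor class of the local complete intersection $X$ as a weighted sum of the total Milnor classes of the ambient hypersurfaces $X_i$. Since each $X_i$ is a hypersurface in the compact complex manifold $M$ defined by a regular section of the line bundle $L_i$, Aluffi's formula applies and gives ${\cal M}(X_i)=(-1)^{n-1}c(L_i)^{n-1}\cap \bigl(\mu_{L_i}({\rm Sing}(X_i))^{\vee}\otimes L_i\bigr)$ in $A_{*}(X_i)$.

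Next, I would substitute this expression for each ${\cal M}(X_i)$ into the formula from Corollary \ref{11} and push the resulting class forward to $A_{*}(X)$ using the natural inclusions. Since the coefficients $a_{j,i}\in A^{*}(X)$ act by cap product, the substitution is term-by-term and each factor of $(-1)^{n-1}$ can be pulled out of the $i$-th summand as a common global sign.

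The only bookkeeping step is tracking signs: the factor $(-1)^{r-1}$ coming from Corollary \ref{11} combines with the factor $(-1)^{n-1}$ from Aluffi's formula to give $(-1)^{n+r-2}=(-1)^{n+r}=(-1)^{n-r}$, exactly the sign appearing in the statement. No new geometric content is used, and everything takes place inside the Chow group $A_{*}(X)$, so compatibility of the cap product with the pushforward poses no issue.

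I do not anticipate a genuine obstacle beyond this sign reconciliation; the corollary is fundamentally a cosmetic rewriting of Corollary \ref{11}, replacing the Milnor class of each hypersurface with its Aluffi expression in terms of the $\mu$-class of ${\rm Sing}(X_i)$. The only subtlety worth flagging is that the $\mu$-class and the operations $(\cdot)^{\vee}$ and $\otimes L_i$ are computed relative to each ambient $M$ and $L_i$ individually, so one should note that this causes no coherence problem since all resulting classes are pushed into the common target $A_{*}(X)$ before being multiplied by $a_{1,i}\cdots a_{r-1,i}$.
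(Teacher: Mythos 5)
Your proposal is correct and follows exactly the paper's own route: the corollary is obtained by substituting Aluffi's formula (\ref{Aluffi-formula}) for each ${\cal M}(X_i)$ into Corollary \ref{11}, with the sign bookkeeping $(-1)^{r-1}(-1)^{n-1}=(-1)^{n-r}$ as you state. No further comment is needed.
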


\section{General Case}

\hspace{0.5cm} Let $M$ be an $m$-dimensional compact complex
analytic manifold and let $E$ be a holomorphic vector bundle over
$M$ of rank $r$. Let $Z(s)$ be the zero set of a regular holomorphic
section $s$ of $E$, so this is an $(m-r)$-dimensional local complete
intersection.

Let $p:N\rightarrow M$ be a proper morphism, where $N$ is an $n$-dimensional compact complex
analytic manifold. Consider the diagram
$$
\xymatrix {  \mathbb{P}(T^{*}N)  \ar[dr]^{\pi_{N}} & \mathbb{P}(p^{*}(T^{*}M)) \ar[l]_{i}
\ar[r]^{g} \ar[d]^{p^{*}(\pi_{M})} & \mathbb{P}(T^{*}M) \ar[d]^{\pi_{M}}  \\ & N \ar[r]^{p}  & M  }
$$
where $\pi_{M}:{\mathbb P}(T^{*}M)\rightarrow M$  and $\pi_{N}:{\mathbb P}(T^{*}N)\rightarrow N$ are the corresponding projectivized cotangent bundles.

\begin{lemma}\label{M4}  Let $Ch({\mathbbm 1}_{Z(s)})$ be the characteristic cycle of the characteristic function ${\mathbbm 1}_{Z(s)}$. Then $$i_{*}g^{*}\;[\mathbb{P}(Ch({\mathbbm 1}_{Z(s)}))]=(-1)^{n-m}\;
[\mathbb{P}(Ch({\mathbbm 1}_{Z(p^{*}s)}))].$$ \end{lemma}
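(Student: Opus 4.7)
The plan is to follow the strategy of Lemma \ref{L:4}, with the diagonal embedding replaced by the proper map $p$. First, I would pick a Whitney stratification $\{S_{\alpha}\}$ of $M$ adapted to $Z(s)$ such that $p$ is transversal to every stratum $S_{\alpha}$ (this is available in the application relevant to Theorem \ref{t:final}, where $p$ is a projective bundle and hence a smooth submersion). Then $\{p^{-1}(S_{\alpha})\}$ is a Whitney stratification of $N$ adapted to $Z(p^{*}s)$. Expanding both characteristic cycles in these stratifications gives
\[
\mathbb{P}(Ch({\mathbbm 1}_{Z(s)})) = \sum_{\alpha} m_{\alpha}\, \mathbb{P}\bigl(\overline{T_{S_{\alpha}}^{*}M}\bigr), \qquad
\mathbb{P}(Ch({\mathbbm 1}_{Z(p^{*}s)})) = \sum_{\alpha} n_{\alpha}\, \mathbb{P}\bigl(\overline{T_{p^{-1}(S_{\alpha})}^{*}N}\bigr),
\]
with $m_{\alpha}=(-1)^{m-r-1}\chi(\phi_{f|Z(s)}F^{\bullet})_{z}$ and $n_{\alpha}=(-1)^{n-r-1}\chi(\phi_{g|Z(p^{*}s)}G^{\bullet})_{x}$, where $F^{\bullet}$ and $G^{\bullet}$ are constructible complexes whose stalkwise Euler characteristics recover the respective indicator functions.

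The next step is the stratum-by-stratum comparison. Fix $x \in p^{-1}(S_{\alpha})$, set $z = p(x)$, and pick $f:(M,z)\to(\C,0)$ with $d_{z}f$ non-degenerate at $S_{\alpha}$. Setting $g = f\circ p$, the transversality of $p$ to the stratification forces $d_{x}g$ to be non-degenerate at $p^{-1}(S_{\alpha})$. Since $G^{\bullet}\simeq p^{*}F^{\bullet}$ and vanishing cycles commute with non-characteristic (in particular, transversal) pullback, we obtain
\[
\phi_{g|Z(p^{*}s)}G^{\bullet} \;\simeq\; p^{*}\bigl(\phi_{f|Z(s)}F^{\bullet}\bigr),
\]
whence $\chi(\phi_{g|Z(p^{*}s)}G^{\bullet})_{x}=\chi(\phi_{f|Z(s)}F^{\bullet})_{z}$. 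Comparing the two sign conventions then gives $m_{\alpha}=(-1)^{n-m}\,n_{\alpha}$.

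On the cycle side, transversality of $p$ to $\overline{S_{\alpha}}$ ensures that a covector on $N$ at a smooth point $y \in p^{-1}(\overline{S_{\alpha}})$ is conormal to $p^{-1}(\overline{S_{\alpha}})$ if and only if it arises, via the codifferential $p^{*}T^{*}M \hookrightarrow T^{*}N$, from a conormal covector to $\overline{S_{\alpha}}$ at $p(y)$. This yields the cycle-level identity
\[
i_{*}g^{*}\bigl[\mathbb{P}(\overline{T_{S_{\alpha}}^{*}M})\bigr] = \bigl[\mathbb{P}(\overline{T_{p^{-1}(S_{\alpha})}^{*}N})\bigr].
\]
Summing over $\alpha$ and inserting the relation $m_{\alpha}=(-1)^{n-m}n_{\alpha}$ delivers the claimed equality. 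The main technical obstacle is to justify rigorously the two ingredients just used: the commutativity of vanishing cycles with $p^{*}$ along the transverse stratification, and the identification of conormal varieties under transversal pullback; both are standard consequences of the transversality hypothesis but require careful bookkeeping of the stratified geometry at the singular strata of $Z(s)$.
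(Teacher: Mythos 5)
Your proposal is correct and follows essentially the same route as the paper: decompose both characteristic cycles over the stratification $\{S_{\alpha}\}$ and its preimage $\{p^{-1}(S_{\alpha})\}$, compare the coefficients via $h_{2}=h_{1}\circ p$ and the compatibility of vanishing cycles with pullback to get $m_{\alpha}=(-1)^{n-m}n_{\alpha}$, and combine with the conormal-variety identity $i_{*}g^{*}[\mathbb{P}(\overline{T_{S_{\alpha}}^{*}M})]=[\mathbb{P}(\overline{T_{p^{-1}(S_{\alpha})}^{*}N})]$. Your explicit attention to the transversality/submersion hypothesis on $p$ (needed for the preimage stratification and the conormal identification) is a point the paper leaves implicit, but it does not change the argument.
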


\begin{proof} Let $\{S_{\alpha}\}$ be a Whitney stratification of $Z(s)$. Then
$\{p^{-1}(S_{\alpha})\}$ is a Whitney stratification of
$Z(p^{*}s)$. By definition, $$\mathbb{P}(Ch({\mathbbm 1}_{Z(s)}))=\sum
m_{\alpha}\mathbb{P}\left(\overline{T_{S_{\alpha}}^{*}M}\right),$$ where
$m_{\alpha}:= (-1)^{m-r-1}\chi\left(\phi_{h_{1}|Z(s)}H_{1}^{\bullet}
\right)_{z_{1}}$ with $z_{1} \in S_{\alpha}$, $H_{1}^{\bullet}$ is the complex of sheaves
such that  $\chi( H_{1}^{\bullet})_{z}={\mathbbm 1}_{Z(s)}(z)$,  $h_{1}:
(M,z_{1})\rightarrow (\C,0)$ is a germ  that  satisfies $(z,d_{z}h_{1}) \in
\overline{T_{S_{\alpha}}^{*}M}$ and  $(z,d_{z}h_{1}) \not\in
\overline{T_{S_{\beta}}^{*}M}$ for all strata $S_{\beta} \neq S_{\alpha}$ and $\phi_{h_{1}|Z(s)}H_{1}^{\bullet}$ is the sheaf of vanishing cycles of $h_{1}$ restricted to $Z(s)$.
Analogously,
$$\mathbb{P}(Ch({\mathbbm 1}_{Z(p^{*}s)}))=\sum
n_{\alpha}\mathbb{P}\left(\overline{T_{p^{-1}(S_{\alpha})}^{*}N}\right),$$
where $n_{\alpha}:=
(-1)^{n-r-1}\chi\left(\phi_{h_{2}|Z(p^{*}s)}H_{2}^{\bullet}
\right)_{z_{2}}$, with $z_{2} \in p^{-1}(S_{\alpha})$, $H_{2}^{\bullet}$ is the
complex of sheaves such that   $\chi(
H_{2}^{\bullet})_{q}={\mathbbm 1}_{Z(p^{*}s)}(q)$, $h_{2}:
(N,z_{2})\rightarrow (\C,0)$ is a germ  that satisfies $(z_{2},d_{z_{2}}h_{2}) \in
\overline{T_{p^{-1}(S_{\alpha})}^{*}N}$ and  $(z_{2},d_{z_{2}}h_{2}) \not\in
\overline{T_{p^{-1}(S_{\beta})}^{*}N}$ for all strata $S_{\beta} \neq
S_{\alpha}$.

\vspace{0.2cm}These definitions do not depend on the choices of $
(z_{1},h_{1}) $ and $(z_{2},h_{2}) $ respectively. So we first  choose $ z_{1} $ and $ h_{1} $ in the
first definition,   then fix a $z_2 \in p^{-1}(z_1)$ and set   $ h_{2}=h_{1} \circ p $.

Note that $$p^{*}H_{1}^{\bullet}=H_{2}^{\bullet} \quad \hbox { and } \quad
\phi_{p^{*}(h_{1}|Z(s))}p^{*}H_{1}^{\bullet}=p^{*}\left(\phi_{h_{1}|Z(s)}H_{1}^{\bullet}\right)\,.$$
Thus we have that $\chi\left(\phi_{h_{2}|Z(p^{*}s)}H_{2}^{\bullet}
\right)_{z_{2}}=\chi\left(\phi_{h_{1}|Z(s)}H_{1}^{\bullet}
\right)_{z_{1}}$. Hence
\begin{equation}\label{M6}m_{\alpha}=(-1)^{n-m} n_{\alpha}. \end{equation}

Note that
\begin{equation}\label{M7}i_{*}g^{*}\;[\mathbb{P}\left(\overline{T_{S_{\alpha}}^{*}M}\right)]=
[\mathbb{P}\left(\overline{T_{p^{-1}(S_{\alpha})}^{*}N}\right)].\end{equation}
Therefore, by equations (\ref{M6}) and (\ref{M7}) we get,
$$i_{*}g^{*}\;[\mathbb{P}(Ch({\mathbbm 1}_{Z(s)}))]=(-1)^{n-m}\;
[\mathbb{P}(Ch({\mathbbm 1}_{Z(p^{*}s)}))],$$
as stated.
\end{proof}

\begin{lemma}\label{1} In the conditions above, if  the morphism $p:N\rightarrow M$ is also flat, then  $$p^{*}\left({\cal M}(Z(s))\right)=\displaystyle\frac{c(p^{*}(TM))}{c(TN)}\cap {\cal M}(Z(p^{*}s)).$$
\end{lemma}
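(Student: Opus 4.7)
The plan is to mirror the proof of Corollary \ref{10}: compute the pullback $p^{*}$ of the virtual class and of the Schwartz--MacPherson class separately, then combine via the definition of the Milnor class. The two inputs are the flat-pullback compatibility of Chern classes and fundamental classes, and Sabbah's characteristic-cycle description of $c^{SM}$ together with Lemma \ref{M4}.

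First I would handle the virtual class. Since $p$ is flat, one has $p^{*}[Z(s)] = [Z(p^{*}s)]$ by \cite[Prop.~1.7]{Ful}, and pullback commutes with Chern classes, so
$$p^{*}c^{Vir}(Z(s)) \;=\; c(p^{*}TM|_{Z(p^{*}s)})\,c(p^{*}E|_{Z(p^{*}s)})^{-1} \cap [Z(p^{*}s)] \;=\; \frac{c(p^{*}TM)}{c(TN)} \cap c^{Vir}(Z(p^{*}s)).$$

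Next, for the Schwartz--MacPherson class, I would start from Sabbah's description
$$c^{SM}(Z(s)) = (-1)^{m-1}\,c(TM|_{Z(s)}) \cap \pi_{M,*}\!\bigl(c({\cal O}_{M}(1))^{-1} \cap [\mathbb{P}(Ch(\mathbbm{1}_{Z(s)}))]\bigr),$$
and apply $p^{*}$. Flat base change for the right-hand square of the diagram preceding Lemma \ref{M4} (as in \cite[Thm.~6.2]{Ful}) gives $p^{*}\pi_{M,*} = (p^{*}\pi_{M})_{*}\,g^{*} = \pi_{N,*}\,i_{*}\,g^{*}$, the last equality because $p^{*}\pi_{M} = \pi_{N} \circ i$. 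The map $i$ is induced by the codifferential $p^{*}T^{*}M \to T^{*}N$, whence $i^{*}{\cal O}_{N}(1) = g^{*}{\cal O}_{M}(1)$; the projection formula then moves $c({\cal O}_{N}(1))^{-1}$ outside $i_{*}$. Applying Lemma \ref{M4} to replace $i_{*}g^{*}[\mathbb{P}(Ch(\mathbbm{1}_{Z(s)}))]$ by $(-1)^{n-m}[\mathbb{P}(Ch(\mathbbm{1}_{Z(p^{*}s)}))]$, and then recognising the result as Sabbah's formula on $N$ (the signs $(-1)^{m-1}\cdot(-1)^{n-m}\cdot(-1)^{n-1}=+1$ collapse), yields
$$p^{*}c^{SM}(Z(s)) = \frac{c(p^{*}TM)}{c(TN)} \cap c^{SM}(Z(p^{*}s)).$$

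Finally, substituting both identities into the definition ${\cal M}(Y) = (-1)^{\dim Y}(c^{Vir}(Y) - c^{SM}(Y))$ and taking the difference produces the claimed formula, exactly as in the passage from Proposition \ref{t:1} and Theorem \ref{T:2} to Corollary \ref{10}.

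The main obstacle will be combining flat base change with the tautological line-bundle identification $i^{*}{\cal O}_{N}(1) = g^{*}{\cal O}_{M}(1)$: the identification requires $i$ to be the projectivisation of the codifferential $p^{*}T^{*}M \to T^{*}N$, while base change requires the flatness of $p$ so that $p^{*}$ is a well-defined ring homomorphism on Chow groups commuting with proper pushforward through fibre squares. Both conclusions rely critically on the flatness hypothesis, which is precisely the additional assumption added to the setting of Lemma \ref{M4}.
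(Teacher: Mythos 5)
Your proposal is correct and follows essentially the same route as the paper: the virtual class is handled via flatness ($p^{*}[Z(s)]=[Z(p^{*}s)]$) and compatibility of Chern classes with pullback, the Schwartz--MacPherson class via Sabbah's characteristic-cycle description combined with Lemma \ref{M4}, and the two are assembled through the definition of the Milnor class. The only difference is that you make explicit the base-change identity $p^{*}\pi_{M,*}=\pi_{N,*}i_{*}g^{*}$ and the identification $i^{*}{\cal O}_{N}(1)=g^{*}{\cal O}_{M}(1)$, which the paper's proof leaves implicit.
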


\begin{proof} Since $p$ is flat, $p^{*}[Z(s)]=[Z(p^{*}s)]$ (by \cite[Proposition 14.1]{Ful}).
Hence,
$$p^{*} c^{Vir}(Z(s))=p^{*}(c(TM).c(E)^{-1} \cap [Z(s)])=c(p^{*}TM) c(p^{*}E)^{-1} \cap p^{*}[Z(s)] \quad$$
$$\qquad \quad \qquad  \qquad =\frac{c(p^{*}TM)}{c(TN)}
c(TN) c(p^{*}E)^{-1} \cap [Z(p^{*}s)]=\frac{c(p^{*}TM)}{c(TN)}\cap c^{Vir}(Z(p^{*}s)).$$
On the other hand, using the description of Schwartz-MacPherson classes due to C.
Sabbah in \cite{Sabbah}, we have:
$$p^{*}c^{SM}(Z(s))=(-1)^{m-1}\frac{c(p^{*}TM)}{c(TN)}c(TN)\cap
p^{*}\left({\pi_{M}}_* \left( c({\cal O}_{M}(1))^{-1}\cap
[\mathbb{P}(Ch({\mathbbm 1}_{Z(s)}))]\right)\right),$$ where
${\cal O}_{M}(1)$ denotes the tautological line bundle on the
projectivized cotangent bundle $\pi_{M}:{\mathbb
P}(T^{*}M)\rightarrow M$. By Lemma \ref{M4}, we have: {\small
$$p^{*}\left({\pi_{M}}_* \left( c({\cal O}_{M}(1))^{-1}\cap
[\mathbb{P}(Ch({\mathbbm 1}_{Z(s)}))]\right)\right)=
(-1)^{n+m}{\pi_{N}}_* \left( c({\cal O}_{N}(1))^{-1}\cap
[\mathbb{P}(Ch({\mathbbm 1}_{Z(p^{*}s)}))]\right),$$} where ${\cal
O}_{N}(1)$ is the tautological line bundle on the projectivisation
$\pi_{N}:P(T^{*}N)\rightarrow N$. Hence,
$$p^{*}c^{SM}(Z(s))=\frac{c(p^{*}TM)}{c(TN)}\cap \left((-1)^{n-1} c(TN)\cap
{\pi_{N}}_* \left( c({\cal O}_{N}(1))^{-1}\cap
[\mathbb{P}(Ch({\mathbbm 1}_{Z(p^{*}s)}))]\right)\right)$$
$$=\frac{c(p^{*}TM)}{c(TN)}\cap c^{SM}(Z(p^{*}s)). \qquad  \quad  \qquad  \qquad   \qquad  \qquad  \qquad $$
Therefore: $p^{*}\left({\cal M}(Z(s))\right)=\displaystyle\frac{c(p^{*}(TM))}{c(TN)}\cap {\cal M}(Z(p^{*}s)).$
\end{proof}

Now we need:.

\begin{lemma} \label{2} Consider  an exact sequence of vector bundles on $M$:
$$0\longrightarrow F \longrightarrow G\longrightarrow H\longrightarrow 0 \,.$$
Given a section $t$ of $G$,  let $t_H$ be the induced section of $H$. Set $Z_{1}=Z(t_{H})$ and let $\tilde t$ be the  section of  $F|_{Z_1}$
induced by $t|_{Z_1}$. Then, $${\cal M}(Z(t))=c(F)^{-1} c_{top}(F) \cap {\cal M}(Z_{1})\,,$$ where $c_{top}(F)$ is the top Chern class of $F$.
\end{lemma}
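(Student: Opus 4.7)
The plan is to split the Milnor class into its Fulton--Johnson and Schwartz--MacPherson components via $\mathcal{M}(Y)=(-1)^{\dim Y}(c^{Vir}(Y)-c^{SM}(Y))$, verify the stated formula on each piece, and reassemble. Throughout, write $j:Z(t)\hookrightarrow Z_1$ for the inclusion.

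The virtual part is a direct Chern-class manipulation. Because $\tilde{t}$ is a regular section of $F|_{Z_1}$ whose zero locus is $Z(t)$, the zero-section intersection formula gives $j_*[Z(t)]=c_{top}(F|_{Z_1})\cap [Z_1]$ in $A_*(Z_1)$. Combining this with the Whitney product identity $c(G)=c(F)c(H)$ coming from the exact sequence and the projection formula for $j$, a short computation yields
$$j_*\,c^{Vir}(Z(t)) \;=\; c(F|_{Z_1})^{-1}\,c_{top}(F|_{Z_1})\,\cap\, c^{Vir}(Z_1).$$

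The Schwartz--MacPherson analog is the technical heart of the lemma and my main obstacle. Here I would use Sabbah's characteristic-cycle description of $c^{SM}$ (already deployed in the proofs of Theorem~\ref{T:2} and Lemma~\ref{M4}), and prove the companion identity by a characteristic-cycle comparison in $T^*M$ of the type $j_*c^{SM}(Z(t))=c(F|_{Z_1})^{-1}c_{top}(F|_{Z_1})\cap c^{SM}(Z_1)$. Concretely, choose a Whitney stratification $\{S_\alpha\}$ of $Z_1$ such that the induced stratification on $Z(t)$ (whose existence is guaranteed by the regularity of $\tilde{t}$) is compatible, and compare the vanishing-cycle multiplicities entering Sabbah's formula stratum by stratum: for a stratum $T$ of $Z(t)$ contained in a stratum $S$ of $Z_1$, a local argument on a normal slice to $S$ identifies the coefficient of $T^*_{\overline{T}}M$ in $Ch(\mathbbm{1}_{Z(t)})$ with the coefficient of $T^*_{\overline{S}}M$ in $Ch(\mathbbm{1}_{Z_1})$ multiplied by a local Euler contribution of $\tilde{t}|_{\text{slice}}$. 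After passing through Sabbah's pushforward (as in the derivations of Theorem~\ref{T:2} and Lemma~\ref{M4}), this local contribution bundles up to the cap by $c(F)^{-1}c_{top}(F)$, yielding the parallel SM identity.

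Combining the two identities and multiplying by $(-1)^{\dim Z(t)}$ to pass from $(c^{Vir}-c^{SM})$ back to $\mathcal{M}$, the dimension shift $\dim Z_1-\dim Z(t)=\mathrm{rk}\,F$ produces a sign $(-1)^{\mathrm{rk}\,F}$ that must be tracked consistently with the paper's conventions (in particular with the Sabbah-formula sign $(-1)^{\dim M-1}$ appearing on each side) in order to absorb it cleanly and reach the stated formula ${\cal M}(Z(t))=c(F)^{-1}c_{top}(F)\cap {\cal M}(Z_1)$. The core difficulty is the Schwartz--MacPherson step: the characteristic-cycle identification requires a careful stratum-level analysis of the vanishing cycles of $\tilde{t}$, parallel to but more intricate than the comparisons carried out in Lemmas~\ref{L:4} and~\ref{M4}.
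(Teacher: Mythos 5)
Your decomposition into a virtual half and a Schwartz--MacPherson half is exactly the paper's strategy, and your treatment of the virtual half (via $j_*[Z(t)]=c_{top}(F|_{Z_1})\cap[Z_1]$, the Whitney identity $c(G)=c(F)c(H)$ and the projection formula) is essentially the computation the paper performs with the virtual bundles $\tau(Z_1,M)$, $\tau(Z(t),M)$ and $\tau(Z(t),Z_1)$. The genuine gap is the Schwartz--MacPherson half, which you yourself flag as your ``main obstacle'': the identity
$$c^{SM}(Z(t))=c(F)^{-1}c_{top}(F)\cap c^{SM}(Z_1)$$
is not derived in your sketch. You reduce it to the assertion that the stratum-wise vanishing-cycle corrections ``bundle up to the cap by $c(F)^{-1}c_{top}(F)$'', but that assertion \emph{is} the statement to be proved, not a step towards it. The paper does not prove it from first principles either: it quotes it directly from Parusi\'nski--Pragacz \cite[Proposition 1.3]{PP}, which computes the Schwartz--MacPherson class of the zero locus of a section of a bundle over a possibly singular variety in exactly this form. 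That citation (or an honest proof of its content) is the missing ingredient. Note also that the characteristic-cycle comparisons of Lemmas \ref{L:4} and \ref{M4}, which you propose to imitate, are of a different nature: there the two constructible functions are related by pulling back along a map transversal to the stratification, so strata correspond bijectively and only signs change; here $\mathbbm{1}_{Z(t)}$ and $\mathbbm{1}_{Z_1}$ live on genuinely different spaces with non-corresponding strata, and the ``local Euler contribution of $\tilde t$ on a normal slice'' you invoke is precisely where all the difficulty sits.

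A further remark in your favour: the sign $(-1)^{\mathrm{rk}\,F}$ you worry about at the end is a real issue. With the paper's normalization ${\cal M}(Y)=(-1)^{\dim Y}\bigl(c^{Vir}(Y)-c^{SM}(Y)\bigr)$ and $\dim Z_1-\dim Z(t)=\mathrm{rk}\,F$, the two unsigned transformation identities combine to give ${\cal M}(Z(t))=(-1)^{\mathrm{rk}\,F}\,c(F)^{-1}c_{top}(F)\cap{\cal M}(Z_1)$, whereas the paper passes to the stated unsigned formula without comment. So your instinct to track that sign explicitly is sound; it does not, however, compensate for the unproven Schwartz--MacPherson step.
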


\begin{proof} Using \cite[Proposition 1.3]{PP}  we have:
$$c^{SM}(Z(t))=c(F)^{-1}c_{top}(F)\cap c^{SM}(Z_{1}).$$
On the other hand, let us consider the following virtual bundles:

$$\tau (Z_{1},M)=TM|_{Z_{1}}-H|_{Z_{1}} \,,$$
$$\tau (Z(t),M)=TM|_{Z(t)}-G|_{Z(t)} \,,$$
$$\quad \tau (Z(t),Z_{1}):=\tau (Z_{1},M)-i^{*}F|_{Z(t)} \,.$$
Notice that $c_{top}(i^{*}F) \cap [Z_{1}]=j_{*}[Z(\tilde t)]$ where $j: Z(\tilde t) \to Z_1$ is the inclusion. On the other hand $Z(\tilde t) = Z(t)$.
Hence we have:
$$c_{top}(i^{*}F) \cap c^{Vir}(Z_{1},M)=c_{top}(i^{*}F) \cap c(TM).c(H)^{-1}\cap [Z_{1}] \,$$  $$\qquad \quad \qquad \qquad =c(TM).c(H)^{-1}\cap j_{*}[Z(t)].$$
Then, setting  $c^{Vir}(Z(t),Z_{1}) := c({\tau}(Z(t),Z_{1}))$, we get:
$$c(F)^{-1}.c_{top}(F) \cap c^{Vir}(Z_{1},M)=c^{Vir}(Z(t),Z_{1})=c^{Vir}(Z(t),M),$$
{\it i.e.}, $c^{Vir}(Z(t))=c(F)^{-1}.c_{top}(F) \cap c^{Vir}(Z_{1})$. Thence: $${\cal M}(Z(t))=c(F)^{-1} c_{top}(F) \cap {\cal M}(Z_{1})\,,$$ as stated.
\end{proof}

\begin{theorem}\label{t:final} Let $s: M \to E$ be a regular section of the holomorphic bundle $E$. Let
$p:\mathbb{P}(E^{\vee})\rightarrow M$ be the projectivization of the dual bundle $E^{\vee}$, so we have the tautological exact sequence   $0\rightarrow F\rightarrow p^{*}E\rightarrow {\cal O}_{\mathbb{P}(E^{\vee})}(1)\rightarrow 0$. Then,
$${\cal M}(Z(s)) = p_{*} \Big(\big[c\big(p^{*}(E^{\vee}) \otimes {\cal O}_{{\mathbb P}(E^{\vee})}
(1)\big)^{-1}\cdot\,
 c_{1}\big({\cal O}_{\mathbb{P}(E^{\vee})}(1)\big)^{r-1}\cdot\, c(F)^{-1}\cdot\, c_{top}(F)\big] \cap {\cal M}(Z(\tilde{s}))\Big),$$
where $\tilde{s}$ is the section induced by $p^{*}s$ in ${\cal
O}_{\mathbb{P}(E^{\vee})}(1)$.

 \end{theorem}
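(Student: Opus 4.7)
The plan is to chain together the two lemmas of this section and then push the resulting identity back down from $\mathbb{P}(E^{\vee})$ to $M$ by means of the projection formula for a projective bundle. Since $p:\mathbb{P}(E^{\vee})\to M$ is a projective bundle, it is flat, so Lemma \ref{1} applies with $N=\mathbb{P}(E^{\vee})$ and yields
\[
p^{*}\!\left({\cal M}(Z(s))\right) \;=\; \frac{c(p^{*}TM)}{c(T\mathbb{P}(E^{\vee}))}\cap {\cal M}(Z(p^{*}s)).
\]
Next, I interpret the tautological short exact sequence $0\to F\to p^{*}E\to {\cal O}(1)\to 0$ on $\mathbb{P}(E^{\vee})$ together with the section $t:=p^{*}s$ of $G:=p^{*}E$ as the data of Lemma \ref{2}; the induced section $t_H$ of ${\cal O}(1)$ is precisely $\tilde s$, so $Z_1=Z(\tilde s)$ and Lemma \ref{2} gives
\[
{\cal M}(Z(p^{*}s)) \;=\; c(F)^{-1}\cdot c_{\mathrm{top}}(F)\cap {\cal M}(Z(\tilde s)).
\]
Substituting this into the previous identity expresses $p^{*}{\cal M}(Z(s))$ entirely in terms of Chern data on $\mathbb{P}(E^{\vee})$ and ${\cal M}(Z(\tilde s))$.

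The second step is to descend this pullback identity back to $M$. I use the standard fact that, for the projective bundle $p:\mathbb{P}(E^{\vee})\to M$ of relative dimension $r-1$, one has $p_{*}\bigl(c_{1}({\cal O}(1))^{r-1}\cap p^{*}\alpha\bigr)=\alpha$ for every $\alpha\in A_{*}(M)$; this is the projection formula combined with the Segre relation $p_{*}(c_{1}({\cal O}(1))^{r-1})=1$. Applying this with $\alpha={\cal M}(Z(s))$ and multiplying both sides of the identity above by $c_{1}({\cal O}(1))^{r-1}$ before pushing forward gives
\[
{\cal M}(Z(s)) \;=\; p_{*}\!\left(\Bigl[c_{1}({\cal O}(1))^{r-1}\cdot \frac{c(p^{*}TM)}{c(T\mathbb{P}(E^{\vee}))}\cdot c(F)^{-1}\cdot c_{\mathrm{top}}(F)\Bigr]\cap {\cal M}(Z(\tilde s))\right).
\]

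It remains to identify $c(p^{*}TM)/c(T\mathbb{P}(E^{\vee}))$ with $c(p^{*}E^{\vee}\otimes {\cal O}(1))^{-1}$. From the relative tangent sequence $0\to T_{\mathbb{P}(E^{\vee})/M}\to T\mathbb{P}(E^{\vee})\to p^{*}TM\to 0$, the ratio equals $c(T_{\mathbb{P}(E^{\vee})/M})^{-1}$. Using the standard description of the relative tangent bundle of $\mathbb{P}(E^{\vee})$ together with the dual Euler sequence $0\to {\cal O}\to p^{*}E^{\vee}\otimes{\cal O}(1)\to T_{\mathbb{P}(E^{\vee})/M}\to 0$ (obtained by tensoring $0\to {\cal O}(-1)\to p^{*}E^{\vee}\to Q\to 0$ with ${\cal O}(1)$), one obtains $c(T_{\mathbb{P}(E^{\vee})/M})=c(p^{*}E^{\vee}\otimes {\cal O}(1))$, so the ratio is indeed $c(p^{*}E^{\vee}\otimes {\cal O}(1))^{-1}$. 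Plugging this in yields the stated formula.

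The part I expect to require the most care is the verification that the hypotheses of Lemmas \ref{1} and \ref{2} are correctly aligned with the present geometric setup: namely, checking that $p^{*}s$ is again a regular section (so that $Z(p^{*}s)$ is a local complete intersection to which Lemma \ref{1} applies), that the induced section of ${\cal O}(1)$ is $\tilde s$ in the sense of Lemma \ref{2}, and that the residual section on $Z(\tilde s)$ appearing implicitly in Lemma \ref{2} recovers $Z(p^{*}s)$. The bundle bookkeeping on $\mathbb{P}(E^{\vee})$ (and in particular keeping $E$ versus $E^{\vee}$ straight in the Euler sequence) is the main technical point; everything else is a clean assembly of the two lemmas via the projective-bundle projection formula.
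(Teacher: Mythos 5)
Your proof is correct and follows essentially the same route as the paper: Lemma \ref{1} applied to the flat projective bundle $p:\mathbb{P}(E^{\vee})\to M$, Lemma \ref{2} applied to the tautological sequence with $t=p^{*}s$, descent via $p_{*}\bigl(c_{1}({\cal O}(1))^{r-1}\cap p^{*}\alpha\bigr)=\alpha$ (the paper cites Fulton, Proposition 3.1), and the identification $c(p^{*}TM)/c(T\mathbb{P}(E^{\vee}))=c(T_{\mathbb{P}(E^{\vee})/M})^{-1}=c(p^{*}E^{\vee}\otimes{\cal O}(1))^{-1}$ via the relative tangent and Euler sequences. The alignment checks you flag at the end are likewise left implicit in the paper's own argument, so nothing is missing relative to it.
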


 \begin{proof} By the Lemma \ref{1}, $$p^{*}\left({\cal M}(Z(s))\right)=\displaystyle\frac{c(p^{*}TM)}{c(T\mathbb{P}(E^{\vee}))}\cap {\cal M}(Z(p^{*}s)).$$
It follows from \cite[Proposition 3.1]{Ful} that
$${\cal M}(Z(s))=p_{*}\left( c_{1}({\cal O}_{\mathbb{P}(E^{\vee})}(1))^{r-1}\frac{c(p^{*}TM)}{c(T\mathbb{P}(E^{\vee}))} \cap{\cal M}(Z(p^{*}s)) \right).$$
Using  Lemma \ref{2}
we have: $${\cal M}(Z(p^{*}s))=c(F)^{-1}c_{top}(F)\cap {\cal M}(Z(\tilde{s}))\,,$$ and we arrive to
\begin{equation}\label{tangsequence0} {\cal M}(Z(s)) = p_{*} \left( c_{1}({\cal O}_{\mathbb{P}(E^{\vee})}(1))^{r-1} \frac{c(p^{*}TM)}{c(T\mathbb{P}(E^{\vee}))}\;c(F)^{-1}c_{top}(F) \cap {\cal M}(Z(\tilde{s}))\right).\end{equation}
Now using the exact sequence  $$0\rightarrow T_{{\mathbb P}(E^{\vee})/{M}}\rightarrow T{{\mathbb P}(E^{\vee})}\rightarrow p^{*}(TM)\rightarrow 0 \,,$$
we get
\begin{equation}\label{tangsequence1} \frac{c(p^{*}(TM))}{c(T{{\mathbb P}(E^{\vee})})} = c(T_{{\mathbb P}(E^{\vee})/{M}})^{-1} \,.\end{equation}
On the other hand, using the exact sequence  $$0\rightarrow {\cal O}_{{\mathbb P}(E^{\vee})}\rightarrow p^{*}(E^{\vee}) \otimes {\cal O}_{{\mathbb P}(E^{\vee})} (1)\rightarrow T_{{\mathbb P}(E^{\vee})/{M}}\rightarrow 0 \,,$$
we get
\begin{equation}\label{tangsequence2} c(p^{*}(E^{\vee}) \otimes {\cal O}_{{\mathbb P}(E^{\vee})} (1))=c(T_{{\mathbb P}(E^{\vee})/{M}})\end{equation}
Therefore by equations (\ref{tangsequence0}), (\ref{tangsequence1}) and (\ref{tangsequence2}) we obtain:
$${\cal M}(Z(s)) = p_{*} \Big(\big[c\big(p^{*}(E^{\vee}) \otimes {\cal O}_{{\mathbb P}(E^{\vee})} (1)\big)^{-1} c_{1}\big({\cal O}_{\mathbb{P}(E^{\vee})}(1)\big)^{r-1} c(F)^{-1}c_{top}(F)\big] \cap {\cal
M}(Z(\tilde{s}))\Big).$$
 \end{proof}

 \end{document}